\newlength\horspace
\newcommand{\h}[1][1.0]{\hspace{#1\horspace}}
\newlength\verspace
\newlength\negverspace
\tikzset{iso/.style={draw=none,every to/.append style={edge node={node [sloped, allow upside down, auto=false]{$\cong$}}}}}
\tikzset{simeq/.style={draw=none,every to/.append style={edge node={node [sloped, allow upside down, auto=false]{$\simeq$}}}}}
\tikzset{simeqS/.style={draw=none,every to/.append style={edge node={node [sloped, allow upside down, auto=false]{$\raisebox{0.8em}{$\simeq$}$}}}}}
\tikzset{aiso/.style={simeqS,preaction={draw,->}}}
\tikzset{simeqSs/.style={draw=none,every to/.append style={edge node={node [sloped, allow upside down, auto=false]{$\raisebox{-0.8em}{\rotatebox{180}{$\simeq$}}$}}}}}
\tikzset{aisos/.style={simeqSs,preaction={draw,->}}}
\tikzset{dotdot/.style={dash pattern=on 0.25ex off 0.2ex, dash phase=0ex}}
\tikzset{RightA/.style={double distance=3.5pt,>={Implies},->},%
	triple/.style={-,preaction={draw,RightA}},%
	quadruple/.style={preaction={draw,RightA,shorten >=0pt},shorten >=1pt,-,double,double distance=0.2pt}}
\newtheorem {teor}{Theorem} [section]
\newtheorem {lemma}[teor]{Lemma}
\newtheorem {prop}[teor]{Proposition}
\newtheorem {costr}[teor]{Construction}
\theoremstyle{definition}
\newtheorem{defne}[teor] {Definition}
\newtheorem{oss}[teor]{Remark}
\newtheorem*{notazione}{Notation}
\newenvironment{cd}{\[\begin{tikzcd}[row sep=7ex,column sep=7ex,ampersand replacement=\&]}{\end{tikzcd}\]\ignorespacesafterend}
\newenvironment{cds}[2]{\[\begin{tikzcd}[row sep=#1ex, column sep=#2ex,ampersand replacement=\&]}{\end{tikzcd}\]\ignorespacesafterend}
\newenvironment{cdN}{\begin{tikzcd}[row sep=7ex,column sep=7ex,ampersand replacement=\&]}{\end{tikzcd}\ignorespacesafterend}
\newenvironment{cdsN}[2]{\begin{tikzcd}[row sep=#1ex, column sep=#2ex,ampersand replacement=\&]}{\end{tikzcd}\ignorespacesafterend}
\newenvironment{eqD*}{\begin{equation*}}{\end{equation*}\ignorespacesafterend}
\def\:{\colon}
\def\phi{\varphi}
\newcommand{\fami}[2]{\left\{{#1}\right\}_{#2}}
\def\dfn#1{{\bfseries\itshape #1}}
\def\predfn#1{{\itshape #1}}
\newcommand{\st}{^{\ast}}
\def\c{\circ}
\DeclareFontFamily{OT1}{pzc}{}
\DeclareFontShape{OT1}{pzc}{m}{it}{<->s*[1.21]pzcmi7t}{}
\DeclareMathAlphabet{\mathpzc}{OT1}{pzc}{m}{it}
\DeclareFontFamily{U}{dutchcal}{\skewchar\font=45}
\DeclareFontShape{U}{dutchcal}{m}{n}{<->s*[1.05] dutchcal-r}{}
\DeclareMathAlphabet{\mathlcal}{U}{dutchcal}{m}{n}
\newcommand{\catfont}[1]{\ensuremath{\mathpzc{#1}}\xspace}
\newcommand{\Cal}[1]{\ensuremath{\mathcal{#1}}\xspace}
\newcommand{\C}{\catfont{C}}
\newcommand{\D}{\catfont{D}}
\newcommand{\Cat}{\catfont{Cat}}
\newcommand{\Gpd}{\catfont{Gpd}}
\newcommand{\Top}{\catfont{Top}}
\newcommand{\Sh}[2][\@nil]{%
	\def\tmp{#1}%
	\ifx\tmp\@nnil{\ensuremath{\catfont{Sh}\hspace{-0.15ex}\left({#2}\right)}}%
	\else{\ensuremath{\catfont{Sh}\hspace{-0.15ex}\left({#2},{#1}\right)}}\fi}
\newcommand{\St}[2][\@nil]{%
	\def\tmp{#1}%
	\ifx\tmp\@nnil{\ensuremath{\catfont{St}\hspace{-0.15ex}\left({#2}\right)}}%
	\else{\ensuremath{\catfont{St}\hspace{-0.15ex}\left({#2},{#1}\right)}}\fi}
\newcommand{\x}[1][]{\h[-1]\times_{#1}\h[-1]}
\newcommand{\opn}[1]{\operatorname{#1}}
\newcommand{\id}[1]{\operatorname{id}_{#1}}
\newcommand{\op}{\ensuremath{^{\operatorname{op}}}}
\newcommand{\restr}[2]{{\left.\kern-\nulldelimiterspace {#1}\vphantom{\big|} \right|_{#2}}}
\newcommand{\dom}{\operatorname{dom}}
\newcommand{\pr}[1]{\operatorname{pr}_{#1}}
\DeclareMathOperator{\colim}{colim}
\newcommand{\ar}[2][]{\xrightarrow[#1]{#2}}
\newcommand{\aar}[2][]{\xrightarrow[#1]{#2}} 
\def\xlongrightarrowfill@{\arrowfill@\relbar\relbar\longrightarrow}
\newcommand{\arr}[2][]{%
	\ext@arrow 0099\xlongrightarrowfill@{#1}{#2}}
\newcommand{\aarr}[2][]{%
	\ext@arrow 0099\xlongrightarrowfill@{#1}{#2}} 
\newcommand{\aR}[2][]{%
	\ext@arrow 0055{\Rightarrowfill@}{#1}{#2}}
\def\xLongrightarrowfill@{\arrowfill@\Relbar\Relbar\Longrightarrow}
\newcommand{\aRR}[2][]{%
	\ext@arrow 0099\xLongrightarrowfill@{#1}{#2}}
\def\aitofill@{\arrowfill@{\lhook\joinrel\relbar}\relbar\rightarrow}
\newcommand{\aito}[2][]{%
	\ext@arrow 3095\aitofill@{#1}{#2}}
\def\Longaitofill@{\arrowfill@{\lhook\joinrel\relbar\joinrel\relbar}\relbar\rightarrow}
\newcommand{\aitoo}[2][]{%
	\ext@arrow 0099\Longaitofill@{#1}{#2}}
\def\xlongleftarrowfill@{\arrowfill@\longleftarrow\relbar\relbar}
\newcommand{\all}[2][]{%
	\ext@arrow 0099\xlongleftarrowfill@{#1}{#2}}
\newcommand{\aL}[2][]{%
	\ext@arrow 0055{\Leftarrowfill@}{#1}{#2}}
\def\xLongleftarrowfill@{\arrowfill@\Longleftarrow\Relbar\Relbar}
\newcommand{\aLL}[2][]{%
	\ext@arrow 0099\xLongleftarrowfill@{#1}{#2}}
\def\xmapstofill@{\arrowfill@{\mapstochar\relbar}\relbar\rightarrow}
\newcommand{\am}[2][]{%
	\ext@arrow 0395\xmapstofill@{#1}{#2}}
\def\xlongmapstofill@{\arrowfill@\relbar\relbar\longmapsto}
\newcommand{\amm}[2][]{%
	\ext@arrow 0399\xlongmapstofill@{#1}{#2}}
\newcommand{\eqq}{\DOTSB\protect\Relbar\protect\joinrel\Relbar}
\def\xeqqfill@{\arrowfill@\Relbar\Relbar\eqq}
\newcommand{\aeqq}[2][]{%
	\ext@arrow 0099\xeqqfill@{#1}{#2}}
\def\xRrightarrowfill@{\arrowfill@\equiv\equiv\Rrightarrow}
\newcommand{\aM}[2][]{\ext@arrow 0359\xRrightarrowfill@{#1}{#2}}
\newcommand{\Llongrightarrow}{%
	\DOTSB\protect\equiv\protect\joinrel\Rrightarrow}
\def\xLlongrightarrowfill@{\arrowfill@\equiv\equiv\Llongrightarrow}
\newcommand{\aMM}[2][]{%
	\ext@arrow 0099\xLlongrightarrowfill@{#1}{#2}}
\newcommand{\aiso}{\aar{\scriptstyle\simeq}}
\newcommand{\aisoo}{\aarr{\scriptstyle\simeq}}
\newcommand{\aequi}{\ensuremath{\stackrel{\raisebox{-1ex}{\kern-.3ex$\scriptstyle\sim$}}{\rightarrow}}}
\newcommand{\aequii}{\ensuremath{\stackrel{\raisebox{-1ex}{\kern-.3ex$\scriptstyle\sim$}}{\longrightarrow}}}
\newcommand{\PB}[1]{\arrow[#1,phantom,"\scalebox{1.6}{\color{black}$\lrcorner$}",very near start]}
\newcommand{\sq}[8]{%
	\begin{cd}
		{#1}\arrow[r,"{#5}"]\arrow[d,"{#6}"']\&{#2}\arrow[d,"{#7}"]\\
		{#3}\arrow[r,"{#8}"']\&{#4}
\end{cd}}
\newcommand{\sqN}[8]{%
	\begin{cdN}
		{#1}\arrow[r,"{#5}"]\arrow[d,"{#6}"']\&{#2}\arrow[d,"{#7}"]\\
		{#3}\arrow[r,"{#8}"']\&{#4}
\end{cdN}}
\newcommand{\tr}[7][4.5]{%
	\begin{cds}{6.5}{#1}
		{#2}\arrow[rr,"{#5}"]\arrow[dr,"{#6}"']\&\&{#3}\arrow[ld,"{#7}"]\\
		\&{#4}
\end{cds}}
\newcommand{\pbsq}[8]{%
	\begin{cd}
		#1 \arrow[r,"{#5}"] \arrow[d,"{#6}"'] \PB{rd} \& #2 \arrow[d,"{#7}"] \\
		#3 \arrow[r,"{#8}"'] \& #4
\end{cd}}
\newcommand{\pbsqunivv}[9][]{%
	\def\foopbsqunivv##1##2##3##4{%
		\begin{cd}
			#2\arrow[rrd,bend left,"{#3}"]\arrow[rdd,bend right=35,"{#4}"']\arrow[rd,dashed,"{#5}"{#1}]\&[-4ex]\\[-4ex]
			\&#6 \arrow[r,"{##1}"] \arrow[d,"{##2}"'] \PB{rd} \& #7 \arrow[d,"{##3}"] \\
			\&#8 \arrow[r,"{##4}"'] \& #9
	\end{cd}}%
	\foopbsqunivv%
}
\newcommand{\pbsqunivvN}[9][]{%
	\def\foopbsqunivvN##1##2##3##4{%
		\begin{cdN}
			#2\arrow[rrd,bend left,"{#3}"]\arrow[rdd,bend right=35,"{#4}"']\arrow[rd,dashed,"{#5}"{#1}]\&[-4ex]\\[-4ex]
			\&#6 \arrow[r,"{##1}"] \arrow[d,"{##2}"'] \PB{rd} \& #7 \arrow[d,"{##3}"] \\
			\&#8 \arrow[r,"{##4}"'] \& #9
	\end{cdN}}%
	\foopbsqunivvN%
}
\def\dfn#1{\textbf{#1}}
\def\predfn#1{\textit{#1}}
\newcommand{\cF}{\Cal{F}}
\newcommand{\Bun}[2]{\catfont{Bun}_{#1}(#2)}
\newcommand{\qst}[2]{[{#1}/{#2}]}
\newcommand{\coequalizer}[7][0.8]{%
	\begin{cd}
		{#2}\arrow[r,shift left=#1ex,"{#5}"]\arrow[r,shift right=#1ex,"{#6}"']\& {#3} \arrow[r,"{#7}"]\&{#4}
\end{cd}}
\newcommand{\shd}[1]{^{\hphantom{*}}_{#1}}
\newcommand{\clst}[1]{\Cal{B}#1}
\begin{document}
	\title[Generalized quotient stacks]{Generalized principal bundles and quotient stacks}
	\author[E. Caviglia]{Elena Caviglia}
	\address{School of Computing and Mathematical Sciences, University of Leicester}
	\email{ec363@leicester.ac.uk}
	\keywords{principal bundles, quotient stacks, classifying stacks, canonical topology}
  \subjclass[2020]{18F20, 18F10, 18C40, 14A20, 18F15}

	\begin{abstract}
		We consider the internalization of the usual notion of principal bundle in a site that has all pullbacks and a terminal object. We use this notion to consider the explicit construction of quotient prestacks via presheaves of categories of principal bundles equipped with equivariant morphisms in this abstract context. We then prove that, if the site is subcanonical and the underlying category satisfies some mild conditions, these quotient prestacks satisfy descent in the sense of stacks.
	\end{abstract}

\date{\today}
\maketitle
\setcounter{tocdepth}{1}
\tableofcontents

\section{Introduction}

 Principal bundles over topological spaces and smooth manifolds have been originally studied in geometry and topology (see \cite{Milnor}, \cite{Husemoller} or \cite{Steenrod}). Over the years the notion of principal bundle has been generalized in different directions. Kock in \cite{Kock} studied generalized fibre bundles introducing a notion that makes sense in any category with finite inverse limits, while a notion of principal bundle in a Grothendieck topos has been studied, even in the infinite dimensional case, by Nikolaus, Schreiber  and Stevenson in \cite{NikolausSchreiberStevenson}. 

In this paper we consider an internalized notion of  principal bundle that makes sense in any site, provided that the underlying category has all pullbacks and a terminal object. The topological group involved in the standard notion becomes here a group object of the category and the notion of locally trivial morphism is internalized by considering pullbacks along the morphisms of a covering family for the Grothendieck topology of the site. This internalized notion was essentially present  in Grothendieck's work (see \cite{GrTechcostr} and\cite{GrTechDesc}) and a similar notion has been used also by Sati and Schreiber in \cite{SatiSchreiber}.
This internal notion coincides with the classical one when the site is $(\Top, std)$, where $\Top$ is the category of compactly generated Hausdorff topological spaces (and continuous maps between them) and $std$ is the standard Grothendieck topology, i.e.\ the covering families of a topological space coincide with its open coverings. Moreover, also principal bundles in the algebraic geometry context (see \cite{LaumonMoret} or \cite{Neumann}) are an instance of this notion.

We believe that one of the main advantages of this internal notion is that it could make possible to use the rich and important theory of principal bundles in new contexts of geometry and mathematical physics.

The importance of principal bundles lies also in the fact that they are the objects classified by classifying spaces via homotopy pullbacks (see \cite{Milnor}) and by classifying stacks via 2-pullbacks. Classifying stacks are particular cases of the very important notion of quotient stacks. 
Quotient stacks are commonly thought as stackifications of presheaves of action groupoids (see as a classical reference the book \cite{LaumonMoret} by Laumon and Moret-Bailly), but an explicit construction that uses presheaves of groupoids of principal bundles equipped with equivariant maps to a fixed space is also present in the literature in the algebraic case and in the differentiable case (see, for instance, \cite{Neumann} and \cite{Heinloth} respectively). In this paper we perform the same construction in the internal context to define quotient prestacks. We, then, prove that these objects are well defined pseudofunctors. A crucial difference between these quotient prestacks and the classical ones is that in our case  prestacks take values in $\Cat$ and not necessarily in $\Gpd$, while classical quotient stacks are stacks of groupoids. This is due to the fact that a generic Grothendieck topology does not always behave well with respect to the morphisms of the category in a sense that is explained in Remark~\ref{notGpd}.

Since the classical quotient prestacks are always stacks, it is natural to wonder whether this is true for the internal ones. The answer to this question seems negative, but a proof of this is not given in this paper. However, we find sufficient abstract conditions that guarantee that  these generalized quotient prestacks are satisfy descent in the sense of stacks. This is the content of the main result of this paper:
\begin{teor}\label{maint}
	Let $\C$ be a  cocomplete category with pullbacks and a terminal object and such that pullbacks preserve colimits. Let then $\tau$ be a subcanonical Grothendieck topology on $\C$ and $X,G\in \C$  with $G$ a group object. Then the quotient prestack $[X/G]$ is a stack.
\end{teor}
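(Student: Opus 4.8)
The plan is to check the two conditions that make a pseudofunctor into $\Cat$ a stack for the topology $\tau$: \emph{descent for morphisms} (full faithfulness of the comparison functor from $[X/G](U)$ to the category of descent data) and \emph{effectiveness of descent data} (its essential surjectivity). Fix once and for all a covering family $\{U_i \too U\}_{i \in I}$ in $\tau$ and write $U_{ij} = U_i \x[U] U_j$, and so on. Two consequences of the hypotheses will be used repeatedly. First, since $\tau$ is subcanonical, every representable $\C(-,Z)$ is a sheaf, so a morphism out of an object $Z$ is uniquely determined by a compatible family of morphisms out of the members of any covering of $Z$. Secondly, pullback functors preserve colimits by hypothesis, and the product functor $G\x(-)$ factors as pullback along $G \too 1$ (the map to the terminal object) followed by the forgetful functor $\C/G \too \C$; as both preserve colimits, so does $G\x(-)$, and hence a colimit of objects carrying a $G$-action inherits a canonical $G$-action.

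For descent of morphisms, take objects $(P,\phi),(Q,\psi)$ of $[X/G](U)$ and a compatible family of equivariant morphisms $f_i\colon \restr{(P,\phi)}{U_i} \too \restr{(Q,\psi)}{U_i}$ agreeing over the $U_{ij}$. By stability of $\tau$ under pullback, $\{P\x[U]U_i \too P\}_i$ is a covering of $P$ and $P\x[U]U_i = \restr{P}{U_i}$, so subcanonicity yields a unique $f\colon P\too Q$ restricting to the $f_i$. That $f$ is $G$-equivariant and commutes with the maps to $X$ can again be tested on this covering of $P$, and therefore holds. Hence the relevant hom-presheaf is a sheaf, which is exactly full faithfulness.

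The substantial point is effectiveness. Given a descent datum consisting of principal bundles $\pi_i\colon P_i \too U_i$, equivariant maps $\phi_i\colon P_i \too X$, and cocycle isomorphisms $\theta_{ij}\colon \restr{P_i}{U_{ij}} \aiso \restr{P_j}{U_{ij}}$ compatible with the $\phi_i$, I would form the glued total space as the colimit
\[
	P \;=\; \colim\Bigl( \coprod_{i,j}\restr{P_i}{U_{ij}} \rightrightarrows \coprod_i P_i \Bigr),
\]
where the upper arrow is the canonical $\restr{P_i}{U_{ij}} \too P_i$ and the lower one is $\theta_{ij}$ followed by $\restr{P_j}{U_{ij}} \too P_j$; this exists since $\C$ is cocomplete. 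Its universal property produces a projection $\pi\colon P\too U$ and, from the compatibility of the $\phi_i$ with the $\theta_{ij}$, a map $\phi\colon P\too X$. The $G$-action assembles from those on the $P_i$ because $G\x(-)$ preserves this colimit, and $\phi$ is equivariant because each $\phi_i$ is.

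What remains, and what I expect to be the main obstacle, is to prove that $\pi\colon P\too U$ is a principal $G$-bundle realising the descent datum, that is $\restr{P}{U_i}\iso P_i$ compatibly with the $\theta_{ij}$. This is precisely where the hypothesis that pullbacks preserve colimits becomes indispensable: pulling the defining colimit back along $U_i\too U$ commutes with the colimit, so $\restr{P}{U_i}$ is the colimit of the pulled-back diagram, which the cocycle condition identifies with $P_i$. Local triviality of $\pi$ then follows from that of the $\pi_i$, and principality---expressed internally by the shear isomorphism $G\x P \aiso P\x[U]P$---can be checked locally over $\{U_i\too U\}$, once more because the pullbacks defining it commute with the colimit. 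Since the identifications $\restr{P}{U_i}\iso P_i$ are compatible with the gluing data by construction, the datum is effective, and $[X/G]$ is a stack.
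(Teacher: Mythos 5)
Your proposal is correct in outline, but it reaches the result by a noticeably different route than the paper. The paper first reduces to the canonical topology $\kappa$ (a bundle for a subcanonical $\tau$ is in particular a bundle for $\kappa$) and then works with Lester's characterization of $\kappa$: covering families are those whose coproduct is a universal effective epimorphism, and descent data are formulated on sieves, with the glued object obtained as $W=\colim\Lambda$ for a functor $\Lambda$ defined on the whole covering sieve. You instead stay with $\tau$ and exploit subcanonicity directly, twice: once to glue morphisms (the representable $\Hom[\C]{-}{Q}$ is a sheaf, so a compatible family on the covering $\fami{P\x[U]U_i\to P}{i}$ of $P$ glues uniquely, which gives both existence and uniqueness in one stroke, where the paper runs a coequalizer-of-the-kernel-pair argument), and once --- though you leave this implicit --- to know that $U_i$ is the coequalizer of its own \v{C}ech pair $\coprod_{j,k}U_{ijk}\rightrightarrows\coprod_j U_{ij}$, which is what actually makes your computation $\restr{P}{U_i}\cong P_i$ work after pulling the defining coequalizer back along $U_i\to U$ and transporting along the $\theta_{ij}$. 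Your \v{C}ech coequalizer over the covering family is the more classical construction and keeps the diagrams small, at the price of having to manage the cocycle condition on triple overlaps explicitly; the paper's sieve-indexed colimit absorbs that bookkeeping into the functoriality of $\Lambda$ at the price of much heavier coherence diagrams. Your observation that $G\x -$ preserves colimits because it factors through base change along $G\to T$ is a clean way to equip the glued object with its action, and is not spelled out in the paper.

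Two small points of friction with the paper's conventions rather than genuine gaps: the paper's notion of principal bundle is ``equivariant and locally trivial'', with no shear-map condition, so the final step of your argument only needs local triviality of $\pi$ (obtained by composing the coverings trivializing each $\pi_i$ with $\fami{U_i\to U}{i}$) and the appeal to the isomorphism $G\x P\cong P\x[U]P$ is extraneous; and the compatibility of the identifications $\restr{P}{U_i}\cong P_i$ with the $\theta_{ij}$, which you assert ``by construction'', is exactly the triple-overlap verification that a complete write-up would have to carry out.
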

Notice that the required conditions on the category $\C$ are not so restrictive. They are, in fact, satisfied by all locally cartesian closed categories with a terminal object. This gives us many contexts in which it is possible to apply our result, such as all cocomplete quasitopoi (see \cite{Penon}) that include the interesting case of diffeological spaces in the sense of \cite{Souriau} as well as all cocomplete elementary topoi.
To prove this theorem, that appears in Section~\ref{st} as Theorem~\ref{arestacks}, we show that the quotient prestack is a stack when the considered topology is the \predfn{canonical topology}. To do so, we use the basis of the canonical topology introduced by Lester in \cite{Lester1}.

The proof  of Theorem \ref{maint} is rather technical, especially when it comes to prove that every descent datum is effective, and we need to use the language of sieves, instead of  the one of covering families, to avoid even more complicated technicalities. We recall this useful version of the definition of descent datum that is surely well-known, but for which we could not find a reference. 
\begin{defne}
	Let $\cF\: \C\op \to \Cat$ be a prestack and let $S$ be a sieve on $Y\in \C$. A \dfn{descent datum on $S$ for $\cF$} is an assignment for every morphism $Z\ar{f} Y$ in $S$ of an object $W_f \in \cF(Z)$ and, for every pair of composable morphisms 
	$Z'\ar{g}Z\ar{f}Y$
	with $f\in S$, of an isomorphism 
	${\phi^{f,g}} \: g\st W_f \aisoo W_{g\c f}$ such that, given morphisms
	$Z''\ar{h} Z' \ar{g}  Z\ar{f} Y$
	with $f\in S$, the following diagram is commutative
	\begin{cd}
		h\st (g\st W_f )\arrow[d,"", aiso] \arrow[r,"h\st \phi^{f,g}"] \&   h\st (W_{f\c g}) \arrow[d,"\phi^{f\c g,h}"] \\
		(g\c h)\st (W_f) \arrow[r,"\phi^{f,g\c h}"']\& {W_{f\c g\c h}.}
	\end{cd}
		This descent datum is called \dfn{effective} if there exist an object $W\in \cF(Y)$ and, for every morphism $Z\ar{f}Y\in S$, an isomorphism 
	$$\psi^{f}\: f\st(W) \aisoo W_f$$
	such that, given morphisms
	$Z'\ar{g} Z \ar{f} Y$
	with $f\in S$, the following diagram is commutative
	\begin{cd}
		g\st (f\st (W)) \arrow[d,"",aiso] \arrow[r,"g\st \psi^{f}"] \&   {g\st(W_f)} \arrow[d,"\phi^{f,g}"] \\
		(f\c g)\st W\arrow[r,"\psi^{f\c g}"]\& {W_{f\c g}.}
	\end{cd}
\end{defne}
A stack for us will be a pseudofunctor that satisfies the descent condition up to equivalence (see, for instance, \cite{Neumann}), as in the following definition:
\begin{defne}
	A prestack $\cF\: \C\op \to \Cat$ is a \dfn{stack} if it satisfies the following conditions:
	\begin{itemize}
		\item[-] Every descent datum for $\cF$ is effective;
		\vspace{2.5mm}
		\item[-](\predfn{Gluing of morphisms}) Given a covering family $\mathcal{U}=\fami{f_i\colon U_i \to U}{i\in I}$, objects $x$ and $y$ of $\cF(U)$ and morphisms $\phi_i\colon \restr{x}{U_i} \to \restr{y}{U_i}$ in $\cF(U_i)$ for every $i\in I$ such that for every $i,j\in I$
		$\restr{\phi_i}{U_{ij}}=\restr{\phi_j}{U_{ij}},$
		there exists a morphism $\eta\colon x \to y$ such that $\restr{\eta}{U_i}=\phi_i$;
		\vspace{2.5mm}
		\item[-] (\predfn{Uniqueness of gluings}) Given a covering family $\mathcal{U}=\fami{f_i\colon U_i \to U}{i\in I}$, objects $x$ and $y$ of $\cF(U)$ and morphisms $\phi,\psi:x\to y$ such that for every $i\in I$
		$\restr{\phi}{U_i}=\restr{\psi}{U_i},$
		then $\phi=\psi$.
	\end{itemize}
\end{defne}
It is well-known that in the classical case isomorphism classes of principal $G$-bundles are in bijection with the first cohomology group with coefficients in $G$ (see \cite{Serre} for a proof of this fact) and we believe that an analogous, but stronger, result can be proved in our generalized setting, taking into account not only the objects but the whole categorical structure on both sides. This result, together with a 2-dimensional version of the theory presented in this paper, is work in progress of the author.

\subsection*{Outline of the paper}
 In section~\ref{prinbun}, after recalling the notions of action and equivariant morphism in the context of group objects of a category, we construct an action of an internal group on a pullback, given its actions on the sources of the two morphisms involved in the pullback (Construction~\ref{pullact}). This particular action is needed to internalize the notion of principal bundle and will be used several times throughout the paper. Moreover, we introduce the notion of \predfn{locally trivial morphism} in a site (Definition~\ref{loctriv}) and we use it  to define \predfn{principal bundles} and morphisms between them in this context  (Definitions~\ref{defprinbun} and~\ref{morprinbun}). We conclude the section observing that this notion truly generalizes the classical notion of principal bundle over a topological space. Section~\ref{pr} is dedicated to the definition of generalized \predfn{quotient prestacks} (Definition~\ref{qp}). A key result of the section is that generalized principal bundles are stable under pullbacks (Proposition~\ref{pullbun}), as this is crucial to prove that our definition of quotient prestack is a good one  (Proposition~\ref{welldef}).
 We start the last section of the paper (Section~\ref{st}) recalling some useful results about the \predfn{canonical topology}. We, then, prove the main result of the paper (Theorem~\ref{arestacks}) which states that, under certain mild assumptions on the underneath category, the quotient prestacks are stacks with respect to every subcanonical topology.

\section{Generalized principal bundles} \label{prinbun}
Let $\C$ be a category with pullbacks and terminal object $T$ and let $\tau$ be a Grothendieck topology on it. In this section we will consider \predfn{principal $G$-bundles} over $X$, where $X$ is an object of $\C$ and $G$ is an internal group in $\C$.

Firstly, we recall the well-known definitions of \predfn{action} of an internal group of $\C$ on another object of the category and \predfn{equivariant} morphism (see Grothendieck's \cite{GrTechcostr} and Heckmann and Hilton's \cite{EichmannHilton} and \cite{EckmannHilton2} as classical references).

\begin{defne}
	Let $G$ be an internal group in $\C$ and let $X$ be an object of $\C$. An \dfn{action of $G$ on $X$} is a morphism
	$$x\: G\x X \to X$$
	such that the following diagrams are commutative
	\begin{eqD*}
	\sqN{G\x G\x X}{G\x X}{G\x X}{X}{\id{G}\x x}{m\x \id{X}}{x}{x}
	\quad \quad
	\sqN{T\x X}{G\x X}{X}{X,}{e\x \id{X}}{\pr{2}}{x}{\id{X}}
\end{eqD*}
where $T$ is the terminal object of $\C$, $m\: G \x G \to G$ is the internal multiplication of $G$ and $e: T \to G$ is the internal neutral element of $G$.
\end{defne}

\begin{defne}
	Let $G$ be a group object in $\C$ that acts on  the objects $X$ and $Y$ of $\C$ with actions $x\: G\x X \to X$ and $y\: G\x Y \to Y$ respectively. A  \dfn{$G$-equivariant morphism} $f\: X \to Y$ is a morphism in $\C$ such that the following square is commutative:
	\sq{G\x X}{X}{G\x Y}{Y.}{x}{\id{G}\x f}{f}{y} 
\end{defne}

\begin{oss}
	Notice that every morphism of $\C$ is $G$-equivariant when the source and the target are equipped with trivial actions of $G$. Having this in mind, we will always think of the objects of $\C$ as equipped with trivial actions of $G$ when the action is not specified.
\end{oss}

We now describe how to define an action of $G$ on the pullback of two morphisms in $\C$, given actions of $G$ on the domains of the the morphisms. This action will be largely used throughout the article.

\begin{costr} \label{pullact}
	Let $G$ be a group object of $\C$ that acts on $P\in \C$ with action $p\: G\x P\to P$, on $Y\in \C$ with action $y\: G\x Y \to Y$ and on $Z\in\C$ with action $z\: G\x Z \to Z$ and let $f:P\to Y$ and $g\: Z\to Y$ be $G$-equivariant morphisms. 
	
	Consider, then, the pullback square
	\pbsq{P\x[Y]Z}{P}{Z}{Y.}{f\st g}{g\st f}{f}{g} 
	We want to define an action of $G$ on the pullback $P\x[Y]Z$ and we can do this using the morphism $\psi\: G\x (P\x[Y]Z) \to P\x[Y]Z$ induced by the universal property of the pullback $P\x[Y]Z$ as in the following diagram
	\pbsqunivv{G\x(P\x[Y]Z)}{p\c(\id{G}\x f\st g)}{z \hspace{0.1ex}\c (\id{G}\x g\st f)}{\psi}{P\x[Y]Z}{P}{Z}{Y,}{f\st g}{g\st f}{f}{g} 
	where the outer square is commutative since it can be obtained by gluing commutative squares as follows
	\begin{cd}
		G\x(P\x[Y]Z) \arrow[r,"\id{G}\x f\st g"]  \arrow[d,"\id{G}\x g\st f"] \& G\x P \arrow[r,"p"] \arrow[d,"\id{G}\x f"] \& P \arrow[d,"f"]\\
		G\x Z \arrow[r,"\id{G} \x g"] \arrow[d,"z"] \& G\x Y \arrow[r,"y"] \& Y \arrow[d,equal,""]\\
		Z \arrow[rr,"g"] \& \& Y,
		\end{cd}
	where the upper right square is commutative, because $f$ is equivariant and the lower square because $g$ is equivariant.
\end{costr}

\begin{prop}
	The morphism $\psi\: G\x (P\x[Y]Z) \to P\x[Y]Z $ defined in Construction~\ref{pullact} is an action of $G$ on $P\x[Y]Z$.
\end{prop}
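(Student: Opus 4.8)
The plan is to check the two action axioms for $\psi$ by leaning on the universal property of the pullback $P\x[Y]Z$: a morphism with codomain $P\x[Y]Z$ is completely determined by its composites with the two projections $f\st g$ and $g\st f$, so each axiom need only be verified after postcomposing with these projections. In both cases the resulting identity reduces, via the two defining equations $f\st g\c\psi=p\c(\id{G}\x f\st g)$ and $g\st f\c\psi=z\c(\id{G}\x g\st f)$ from Construction~\ref{pullact}, to the corresponding axiom for the actions $p$ and $z$, which hold by hypothesis.

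For the unit axiom I would show that $\psi\c(e\x\id{P\x[Y]Z})=\pr{2}$. Postcomposing the left-hand side with $f\st g$ and using the first defining equation rewrites it as $p\c(\id{G}\x f\st g)\c(e\x\id{P\x[Y]Z})$; a reindexing of the cartesian product gives $(\id{G}\x f\st g)\c(e\x\id{P\x[Y]Z})=(e\x\id{P})\c(\id{T}\x f\st g)$, whereupon the unit axiom for $p$ collapses the whole expression to $\pr{2}\c(\id{T}\x f\st g)=f\st g\c\pr{2}$. The identical argument with $z$ and $g\st f$ gives $g\st f\c\pr{2}$, and since the composites of $\psi\c(e\x\id{P\x[Y]Z})$ and of $\pr{2}$ with both projections agree, the universal property forces the two morphisms to be equal.

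For the associativity axiom I would establish the equality $\psi\c(\id{G}\x\psi)=\psi\c(m\x\id{P\x[Y]Z})$ of morphisms out of $G\x G\x(P\x[Y]Z)$. Postcomposing the left-hand side with $f\st g$ and applying the first defining equation twice, together with the interchange law for the product, produces $p\c(\id{G}\x p)\c(\id{G\x G}\x f\st g)$, while postcomposing the right-hand side produces $p\c(m\x\id{P})\c(\id{G\x G}\x f\st g)$; these coincide exactly by the associativity axiom for $p$. The parallel computation with $z$ and $g\st f$ invokes the associativity axiom for $z$, and one last appeal to the uniqueness clause in the universal property of $P\x[Y]Z$ finishes the proof.

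There is no genuine obstacle here: the argument is a formal consequence of the universal property of the pullback and of the fact that $p$ and $z$ are already actions. The only mildly delicate point is the bookkeeping with the functoriality of the cartesian product, namely systematically rewriting composites such as $(\id{G}\x f\st g)\c(\id{G}\x\psi)$ as $(\id{G}\x p)\c(\id{G\x G}\x f\st g)$. It is worth noting that the equivariance of $f$ and $g$, already consumed in Construction~\ref{pullact} to ensure that $\psi$ is well defined, plays no further role: the axioms for $\psi$ follow purely from those for $p$ and $z$.
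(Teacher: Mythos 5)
Your proposal is correct and follows essentially the same route as the paper: both verify the two action axioms by postcomposing with the projections $f\st g$ and $g\st f$, using the defining equations of $\psi$ from Construction~\ref{pullact} to reduce each axiom to the corresponding axiom for $p$ and $z$, and then invoking the uniqueness clause of the pullback's universal property. The paper simply records these computations as large commutative diagrams rather than equational chains, and your closing observation that the equivariance of $f$ and $g$ is consumed only in the construction of $\psi$ is also accurate.
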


\begin{proof}
	To prove that $\psi$ is an action we need to show that the following diagrams commute:
		\begin{eqD*}
		\sqN{G\x G\x (P\x[Y]Z)}{G\x (P\x[Y]Z)}{G\x (P\x[Y]Z)}{P\x[Y]Z}{\id{G}\x \psi}{m\x \id{P\x[Y]Z}}{\psi}{\psi}
		\quad \quad
		\sqN{T\x (P\x[Y]Z)}{G\x( P\x[Y]Z)}{P\x[Y]Z}{P\x[Y]Z.}{e\x \id{P\x[Y]Z}}{\pr{2}}{\psi}{\id{P\x[Y]Z}}
	\end{eqD*}
To do this, we will use the universal property of the pullback $P\x[Y]Z$.

We start from the diagram on the left and we consider the post-composition with $f\st g$, so we need to prove that $f\st g \c \psi\c (\id{G}\x \psi)=f\st g \c \psi\c (m\x \id{P\x[Y]Z})$. This is shown by the following commutative diagram
\begin{cd}
	{G\x G\x (P\x[Y]Z)} \arrow[rr,"\id{G}\x \psi"] \arrow[ddd,"m\x \id{P\x[Y]Z}"] \arrow[rd,"\id{G}\x(\id{G}\x f\st g)"]\& \& {G\x (P\x[Y]Z)} \arrow[r,"\psi"] \arrow[d,"\id{G}\x f\st g"]\& {P\x[Y]Z} \arrow[ddd,"f\st g"] \\
	\& {G\x G\x P} \arrow[r,"\id{G}\x p"] \arrow[d,"m\x \id{P}"]\& {G\x P} \arrow[rdd,"p"] \&  \\
	\& {G\x P}\arrow[rrd,"p"]\& \& \\
	{G\x (P\x[Y]Z)} \arrow[ru,"\id{G}\x f\st g"] \arrow[r,"\psi"] \&{P\x[Y]Z} \arrow[rr,"f\st g"] \& \& {P.}  
	\end{cd}
Analogously, we have to prove that $g \st f\c \psi\c (\id{G}\x \psi)=g \st f \c \psi\c (m\x \id{P\x[Y]Z})$ and this is shown by the commutative diagram
\begin{cd}
	{G\x G\x (P\x[Y]Z)} \arrow[rr,"\id{G}\x \psi"] \arrow[ddd,"m\x \id{P\x[Y]Z}"] \arrow[rd,"\id{G}\x(\id{G}\x g \st f)"]\& \& {G\x (P\x[Y]Z)} \arrow[r,"\psi"] \arrow[d,"\id{G}\x g \st f"]\& {P\x[Y]Z} \arrow[ddd,"g \st f"] \\
	\& {G\x G\x P} \arrow[r,"\id{G}\x z"] \arrow[d,"m\x \id{Z}"]\& {G\x Z} \arrow[rdd,"z"] \&  \\
	\& {G\x Z}\arrow[rrd,"z"]\& \& \\
	{G\x (P\x[Y]Z)} \arrow[ru,"\id{G}\x g \st f"] \arrow[r,"\psi"] \&{P\x[Y]Z} \arrow[rr,"g \st f"] \& \& {Z.}  
\end{cd}
By the universal property of the pullback $P\x[Y]Z$, we conclude that the first property required to be an action holds.
To prove that the second property holds as well, we use again the universal property of the pullback $P\x[Y]Z$. 

\noindent The following diagram shows that $f\st g \c \psi \c  e \x \id{P\x[Y]Z}=f\st g \c \pr{2}$:
\begin{cd}
	{T \x (P\x[Y]Z)}  \arrow[dd,"\pr{2}"']\arrow[rr,"e\x \id{P\x[Y]Z}"] \arrow[rd,"\id{T}\x f\st g"] \& \& {G\x (P\x[Y]Z)} \arrow[r,"\psi"] \arrow[d,"\id{G}\x f\st g"] \& {P\x[Y]Z}\arrow[dd,"f\st g"]\\
	\& {T\x P} \arrow[r,"e\x \id{P}"] \arrow[rrd,"\pr{2}"', end anchor= {[yshift=0.5ex]}]\&  {G\x P} \arrow[rd,"p", end anchor= {[yshift=1ex]}]\& \\
	{P\x[Y]Z} \arrow[rrr,"f\st g"']\& \&  \& {P.}
\end{cd}
Analogously, the following diagram shows that $g\st f \c \psi \c  e \x \id{P\x[Y]Z}=g\st f \c \pr{2}$:
\begin{cd}
	{T \x (P\x[Y]Z)}  \arrow[dd,"\pr{2}"']\arrow[rr,"e\x \id{P\x[Y]Z}"] \arrow[rd,"\id{T}\x g \st f"] \& \& {G\x (P\x[Y]Z)} \arrow[r,"\psi"] \arrow[d,"\id{G}\x g \st f"] \& {P\x[Y]Z}\arrow[dd,"g \st f"]\\
	\& {T\x Z} \arrow[r,"e\x \id{Z}"] \arrow[rrd,"\pr{2}"', end anchor= {[yshift=0.5ex]}]\&  {G\x Z} \arrow[rd,"z", end anchor= {[yshift=1ex]}]\& \\
	{P\x[Y]Z} \arrow[rrr,"g \st f"']\& \&  \& {Z.}
\end{cd}
Hence we conclude that $\psi \c  e \x \id{P\x[Y]Z}=\pr{2}$ and so the second property of action holds for $\psi$.
	\end{proof}

\begin{oss}
	Notice that the result of Proposition \ref{pullact} can be seen as consequence of the fact that $G$-actions are the algebras for the monad $G\x -$ and monadic functors create all limits that exist. This is the approach used in the case of topological spaces by Sati and Schreiber in \cite{SatiSchreiber}.
\end{oss}

In order to introduce the concept of principal bundle in this more general context, we need to define \predfn{locally trivial} morphisms with respect to the fixed Grothendieck topology on $\C$. 

\begin{defne} \label{loctriv}
		Let $g\: Y\to X$ be a morphism of $\C$. We say that $g$ is \dfn{locally trivial} if there exists a covering $\fami{f_i\: U_i\to X}{i\in I}$ of $X$ such that for every $i\in I$ the pullback 
	\pbsq{Y\x [X]U_i}{Y}{U_i}{X}{g \st f_i}{f_i\st g}{g}{f_i}
	is isomorphic to $G\x U_i$ and the isomorphism 
	$$\phi_i\:  Y\x[X] U_i \to G\x U_i$$
	is $G$-equivariant, i.e. 
	\sq{G\x(Y\x[X]U_i)}{Y\x[X]U_i}{G\x (G\x U_i)}{G\x U_i,}{\psi_i}{\id{G}\x \phi_i}{\phi_i}{\theta_i}
	where the action $\theta_i\: G\x (G\x U_i) \to G\x U_i$ is the composition of the canonical isomorphism $G\x (G\x U_i)\aiso G\x U_i$ with the internal multiplication $m: G\x G \to G$ of $G$ and the action $\psi_i\: G\x (Y\x[X] U_i) \to  Y\x[X] U_i$ is defined as in Construction~\ref{pullact}.
\end{defne}

We are now ready to give the main definition of this section.

\begin{defne} \label{defprinbun}
	Let $G$ be an internal group in the site $(\C, \tau)$ and let $X$ be an object of $\C$. A \dfn{principal $G$-bundle} over $X$ is a a $G$-equivariant locally trivial morphism $\pi_P\: P\to X$, where the object $P$ of $\C$ is  equipped by an action $p\: G\x P\to P$.
\end{defne}
 
 \begin{defne} \label{morprinbun}
 	Let $\pi_P\: P\to X$ and $\pi_Q\: Q\to X$ be principal $G$-bundles over $X$ in $\C$. A \dfn{morphism of principal $G$-bundles} $\phi$ from $\pi_P\: P\to X$ to $\pi_Q\: Q\to X$ is a morphism $\phi\: P \to Q$ in $\C$ such that 
 	\tr{P}{Q}{X.}{\phi}{\pi_P}{\pi_Q}
 \end{defne}

\begin{notazione}
	Principal $G$-bundles over $X$ and morphisms of principal bundles between them form a category that we will denote $\Bun{G}{X}$.
\end{notazione}

\begin{oss}
	 Consider the category $\Top$ is the category of compactly generated Hausdorff topological spaces and $std$ is the standard topology on it, i.e.\ the coverings of a space are the usual open coverings (with inclusions of the opens in the space as morphisms). In this setting our notion of principal bundles coincides with the usual one.
	 
	 Notice that up to now we do not need to restrict ourselves to only consider compactly generated Hausdorff spaces but we have chosen to do so both because this category is much more well-behaved than the category of all topological spaces and because this way we can apply Theorem \ref{arestacks} to $(\Top, std)$ as pullbacks preserve colimits in this category.
\end{oss}

\section{Generalized quotient prestacks} \label{pr}
In this section we will explicitly construct quotient prestacks over a site using the traditional construction in our internal context. We will then prove that they are well-defined pseudofunctors. To do so, we will also show that the principal bundles introduced in Definition \ref{defprinbun} are stable under pullbacks.

We are now ready to give the explicit construction of quotient prestacks in our context. This construction follows a well-known recipe that is present in the literature in the case of schemes over a field (see for instance page 28 of \cite{Neumann}) and in the differentiable case (see for instance Example 1.5 of \cite{Heinloth}).

\begin{defne} \label{qp}
	Let $\C$ be a category with pullbacks and terminal object and let $\tau$ be a Grothendieck topology on it. Let then $X$ be an object of $\C$ and let $G$ be a group object of $\C$  that acts on $X$ with action $x\: G\x X \to X$ 
	The \dfn{quotient prestack} 
	$$\qst{X}{G}\: \C\op \to \Cat$$
	is defined as follows:
	\begin{itemize}
		\item for every object $Y\in \C$ we define $\qst{X}{G}(Y)$ as the category that has
		\begin{itemize}
			\item as objects the pairs $(P,\alpha)$ where $\pi_P\: P\to Y$ is a principal $G$-bundle over $Y$ and $\alpha\: P\to X$ is a $G$-equivariant morphism;
			\item as morphisms from the object $(P,\alpha)$ to the object $(Q,\beta)$ the morphisms of principal $G$-bundles $\phi\: P\to Q$ such that
			\tr{P}{Q}{X;}{\phi}{\alpha}{\beta}
		\end{itemize}
	\item for every morphism $f\: Z\to Y$ in $\C$, the functor $$\qst{X}{G}(f)\: \qst{X}{G}(Y) \to \qst{X}{G}(Z)$$ sends
	\begin{itemize}
		\item an object $(P,\alpha)\in \qst{X}{G}(Y)$ to the pair $(P\x[Y]Z, \alpha \c \pi_P \st f)$, where $P\x[Y]Z$ is the pullback
		\pbsq{P\x[Y]Z}{P}{Z}{Y;}{\pi_P \st f}{f \st \pi_P}{\pi_P}{f}
		\item a morphism $\phi\: (P,\alpha) \to (Q,\beta)$ to the morphism $\qst{X}{G}(f)(\phi)$ defined by the universal property of the pullback as in the following diagram
		\pbsqunivv{P\x[Y]Z}{\phi\c \pi_P\st f}{\pi_P}{\qst{X}{G}(f)(\phi)}{Q\x[Y]Z}{Q}{Z}{Y,}{\pi_Q \st f}{f\st \pi_Q}{\pi_Q}{f}
		where the biggest square is commutative because $\phi$ is $G$-equivariant.
	\end{itemize}
	\end{itemize}
\end{defne}

\begin{oss} \label{notGpd}
	While the classical quotient prestacks take values in $\Gpd$, this is not necessarily true for the ones just introduced. This is due to the fact that in a generic site it is not always possible to define morphisms locally (on a covering) and then use the local definitions to define a global morphism. The classical proofs of the fact that morphisms of principal bundles over topological spaces are always isomorphisms (see for instance \cite{Mitchell}, Proposition 2.1) strongly rely on this special property of the site $(\Top, std)$. 
\end{oss}

In order to show that $\qst{X}{G}$ is well-defined and that it is a prestack (i.e. a pseudofunctor that takes values in $\Cat$), we will need the following result.

\begin{prop} \label{pullbun}
	Let $\pi_P\: P\to Y$ be a principal $G$-bundle  over $Y$. Then the pullback $P\x[Y]Z$ given by the following pullback square 
	\pbsq{P\x[Y]Z}{P}{Z}{Y}{\pi_P\st f}{f\st \pi_P}{\pi_P}{f}
	is a principal $G$-bundle over $Z$ with morphism $\pi_{P\x[Y]Z}:=f\st \pi_P$.
\end{prop}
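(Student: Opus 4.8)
The plan is to verify the three requirements of Definition~\ref{defprinbun} for the morphism $\pi_{P\x[Y]Z}=f\st\pi_P\: P\x[Y]Z \to Z$: that $P\x[Y]Z$ carries a $G$-action, that $f\st\pi_P$ is $G$-equivariant, and that $f\st\pi_P$ is locally trivial. The first two are essentially immediate from the material already developed. I would equip $P\x[Y]Z$ with the action $\psi$ produced by Construction~\ref{pullact}, applied to the $G$-equivariant maps $\pi_P\: P\to Y$ (with the given action $p$ on $P$ and the trivial action on $Y$) and $f\: Z\to Y$ (with the trivial actions on $Z$ and $Y$). By the defining property of $\psi$, the projection $f\st\pi_P$ satisfies $f\st\pi_P \c \psi = \pr{2}\c(\id{G}\x f\st\pi_P)$, which is exactly the assertion that $f\st\pi_P$ is $G$-equivariant when $Z$ is given the trivial action. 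So the whole content of the statement reduces to establishing local triviality.

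For local triviality I would first use that $\pi_P$ is locally trivial to fix a covering $\fami{h_i\: U_i \to Y}{i\in I}$ of $Y$ together with $G$-equivariant isomorphisms $\phi_i\: P\x[Y]U_i \aiso G\x U_i$ as in Definition~\ref{loctriv}. The natural candidate for a trivializing covering of $Z$ is the base change of this covering along $f$: by the stability (base-change) axiom of a Grothendieck topology, the family $\fami{f\st h_i\: Z\x[Y]U_i \to Z}{i\in I}$ is a covering of $Z$. Writing $V_i \deq Z\x[Y]U_i$, I then want to identify the pullback $(P\x[Y]Z)\x[Z]V_i$ with $G\x V_i$. This is done by iterated pullback pasting: composing the two pullback squares whose bottom row is $V_i \to Z \ar{f} Y$ gives $(P\x[Y]Z)\x[Z]V_i \cong P\x[Y]V_i$, and since the canonical map $V_i \to Y$ factors through $U_i$, a second pasting gives $P\x[Y]V_i \cong (P\x[Y]U_i)\x[U_i]V_i$. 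Applying $\phi_i$ and using that the base change of the projection $\pr{2}\: G\x U_i \to U_i$ along $V_i \to U_i$ is again a product, I obtain
\[
(P\x[Y]Z)\x[Z]V_i \;\cong\; (P\x[Y]U_i)\x[U_i]V_i \;\cong\; (G\x U_i)\x[U_i]V_i \;\cong\; G\x V_i .
\]

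The main obstacle is to check that this composite isomorphism is $G$-equivariant in the sense of Definition~\ref{loctriv}, i.e.\ that it intertwines the action on $(P\x[Y]Z)\x[Z]V_i$ coming from Construction~\ref{pullact} with the multiplication action $\theta$ on $G\x V_i$. The key point to verify is that the actions produced by Construction~\ref{pullact} are compatible with pullback pasting: the action built on $(P\x[Y]Z)\x[Z]V_i$ out of the action $\psi$ on $P\x[Y]Z$ agrees, under the pasting isomorphism, with the action built directly on $P\x[Y]V_i$ out of the action $p$ on $P$. This follows from the uniqueness clause in the universal property of the pullback $P\x[Y]V_i$, since both actions are the unique morphism $G\x(P\x[Y]V_i)\to P\x[Y]V_i$ whose composites with the projections to $P$ and to $V_i$ are $p$ and the trivial action respectively (equivalently, one can invoke the monadic description of $G$-actions noted after Construction~\ref{pullact}, so that the limit-creating forgetful functor transports the pasting isomorphism of underlying pullbacks to an isomorphism of $G$-objects). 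Granting this, the equivariance of $\phi_i$ transports along the base change to an equivariance of $(P\x[Y]U_i)\x[U_i]V_i \cong (G\x U_i)\x[U_i]V_i$, and a final check identifies the base change of $\theta$ on $G\x U_i$ with the multiplication action $\theta$ on $G\x V_i$, completing the verification that $f\st\pi_P$ is locally trivial.
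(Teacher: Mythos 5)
Your proposal is correct, and its skeleton coincides with the paper's: the same action $\psi$ from Construction~\ref{pullact}, equivariance of $f\st \pi_P$ read off from the defining property of $\psi$, and the pulled-back covering $\fami{f\st g_i\: V_i\x[Y]Z\to Z}{i\in I}$ as the trivializing family. Where you genuinely diverge is in how the trivializing isomorphism and its equivariance are obtained. The paper constructs by hand a morphism $h_i\: G\x (V_i\x[Y]Z)\to P\x[Y]Z$ and verifies directly, through several pages of universal-property diagram chases, that $G\x (V_i\x[Y]Z)$ satisfies the universal property of $(P\x[Y]Z)\x[Z](V_i\x[Y]Z)$ and that the resulting isomorphism intertwines the two actions. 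You instead obtain the isomorphism as a composite of pullback-pasting identifications and dispose of equivariance by observing that the actions of Construction~\ref{pullact} are themselves characterized by a universal property (equivalently, by the monadicity of $G\x -$ noted in the remark following the construction), so that pasting isomorphisms of underlying objects are automatically isomorphisms of $G$-objects. Your route is shorter and more conceptual, and it makes visible why the equivariance checks must succeed; the cost is that it leans on the pasting lemma and on the creation of limits by the forgetful functor from $G$-objects, which the paper avoids invoking and instead replicates concretely. Either way the content is the same, and your argument has no gaps: the one point you rightly flag as the crux --- compatibility of Construction~\ref{pullact} with pasting --- is correctly settled by the uniqueness clause of the pullback's universal property.
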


\begin{proof}
	
	We begin considering the action $\psi\: G\x (P\x[Y]Z) \to P\x[Y]Z$ of $G$ on the pullback $P\x[Y]Z$ defined as in Construction~\ref{pullact}.
	
	We notice that the morphism $\pi_{P\x[Y]Z}$ is $G$-equivariant since the square 
	\sq{G\x(P\x[Y]Z)}{P\x[Y]Z}{G\x P}{P,}{\psi}{\id{G}\x f\st \pi_P}{f\st \pi_P}{p}
  is commutative by definition of $\psi$.
	
	We now need to show that $\pi_{P\x[Y]Z}$ is locally trivial.
	
	\noindent Since $\pi_P\: P \to Y$ is a principal $G$-bundle, there exists a covering $\mathcal{V}=\fami{g_i\: V_i\to Y}{i\in I}$ of $Y$ such that for every $i\in I$ the pullback 
	\pbsq{P\x[Y]V_i}{P}{V_i}{Y}{\pi_P\st g_i}{g_i\st \pi_P}{\pi_P}{g_i}
	is isomorphic to $G\x V_i$ and the isomorphism $\alpha_i\: P\x[Y]V_i\to G\x V_i$ is $G$-equivariant (here the action of $G$ on $P\x[Y]V_i$ is defined as in Construction~\ref{pullact}).
	Therefore, we can use the covering $\mathcal{V}$ of $Y$ to define the family $\mathcal{U}=\fami{f\st g_i\: V_i\x[Y]Z\to Z}{i\in I}$ that is a covering of $Z$ by definition of Grothendieck topology.
	In order to use this covering to prove that $\pi_{P\x[Y]Z}$ is locally trivial, we need to show that for every $i\in I$ there exists an isomorphism
	$$\phi_i\: (P\x[Y]Z) \x[Z] (V_i\x[Y] Z) \to G\x (V_i\x[Y] Z).$$
	Equivalently, we need to show that $G\x (V_i\x[Y] Z)$ satisfies the universal property of the pullback $(P\x[Y]Z) \x[Z] (V_i\x[Y] Z)$. To do this, we define a morphism 
	$$h_i\: G\x(V_i\x[Y] Z) \to P\x[Y]Z$$
	for every $i\in I$ as shown in the following diagram
	\pbsqunivv{G\x(V_i\x[Y]Z)}{\pi_P\st g_i \c \hspace{0.3ex}\alpha_i^{-1}\c \hspace{0.3ex} (\id{G}\x g_i\st f)}{\pr{2}\c (\id{G} \x f\st g_i)}{h_i}{P\x[Y] Z}{P}{Z}{Y}{\pi_P\st f}{f\st\pi_{P}}{ \pi_P}{f}
and we want to show that $G\x (V_i\x[Y] Z)$ with morphisms $h_i\: G\x(V_i\x[Y] Z) \to P\x[Y]Z$ and $\pr{2}\: G\x (V_i\x[Y] Z) \to V_i\x[Y] Z$ satisfies the universal property of the pullback $(P\x[Y]Z) \x[Z] (V_i\x[Y] Z)$.
Firstly, we need to prove that the following diagram is commutative:
\sq{G\x(V_i\x[Y] Z)}{P\x[Y] Z}{V_i\x[Y]Z}{Z.}{h_i}{\pr{2}}{f\st \pi_P}{f\st g_i}
This is true since we can obtain this square by gluing a commutative square given by the definition of $h_i$ and a trivially commutative square.

Let now $C$ be an object of $\C$ equipped with two morphisms $r\: C \to P\x[Y] Z$ and $l\: C\to V_i\x[Y] Z$ such that
\sq{C}{P\x[Y]Z}{V_i\x[Y]Z}{Z.}{r}{l}{f\st \pi_P}{f\st g_i}
We need to prove that there exists a unique morphism $t\: C\to G\x (V_i\x[Y] Z)$ such that
\pbsqunivv{C}{r}{l}{t}{G\x (V_i\x[Y]Z)}{P\x[Y] Z}{V_i\x[Y] Z}{Z.}{h_i} {\pr{2}}{f\st \pi_P}{f\st g_i}
We define 
$$t:= (\pr{1}\c\alpha_i \c q) \x l,$$
where $q\: C\to P\x[Y] V_i$ is the morphism given by the universal property of $P\x[Y] V_i$, as shown in the following diagram:
\pbsqunivv{C}{g_i\st f \c l}{\pi_P \st f\c r}{q}{P\x[Y] V_i}{V_i}{P}{Y.}{\pi_P\st g_i}{g_i\st \pi_P}{\pi_P}{g_i}
It is clear that $\pr{2}\c t=l$ and so it remains to prove that $h_i\c t=r$ and to do so we use the universal property of the pullback $P\x[Y] Z$. Looking at the post-composition with $f\st \pi_P$, we have the following commutative diagram:
\begin{cd}
	{C} \arrow[rr,"r"] \arrow[dd,"t"'] \arrow[rd,"l"]\& \& {P\x[Y] Z} \arrow[dd,"f\st \pi_P"]\\
	\& {V_i\x[Y]Z} \arrow[rd,"f\st g_i"]\& \\
	{G\x(V_i\x[Y]Z)} \arrow[ru,"\pr{2}"] \arrow[d,"h_i"']\& \& {Z}\\
	{P\x[Y]Z.} \arrow[rru,"f\st \pi_P"']\& \& 
\end{cd}
On the other hand, the post-composition with $\pi_P \st f$ gives the following commutative diagram:
\begin{cd}
	{C} \arrow[rrr,"r"] \arrow[d,"t"'] \arrow[rrd,"q"] \& \& \&{P\x[Y]Z} \arrow[ddd,"\pi_P \st f"] \\
	{G\x(V_i\x[Y]Z)} \arrow[dd,"h_i"'] \arrow[rd,"\id{G}\x g_i \st f"]\& \& {P\x[Y]V_i} \arrow[rdd,"\pi_{P}\st g_i"] \& \\
	\& {G\x V_i} \arrow[ru,aiso,"\alpha_i^{-1}"']\& \& \\
	{P\x[Y] Z} \arrow[rrr,"\pi_{P} \st f"'] \& \& \& {P,}
\end{cd}
where every internal diagram is commutative by definition except the upper-left one which is commutative because $G\x V_i$ has the universal property of the product and we have the following commutative diagrams
\begin{cd}
	{C} \arrow[r,"q"] \arrow[dd,"t"]\& {P\x[Y]V_i} \arrow[r,"\alpha_i"]\& {G\x V_i} \arrow[dd,"\pr{1}"] \\
	\& {G\x(V_i\x[Y] Z)} \arrow[rd,"\pr{1}"] \& \\
	{G\x(V_i\x[Y] Z)} \arrow[r,"\id{G}\x g_i \st f"] \arrow[ru,equal,""] \& {G\x V_i} \arrow[r,"\pr{1}"]\& {G} 
	\end{cd}
\vspace{1.5mm}
\begin{cd}
	{C} \arrow[rd,"l"] \arrow[r,"q"] \arrow[dd,"t"]\& {P\x[Y]V_i} \arrow[rdd,bend left=15,"\pi_{P}\st g_i"] \arrow[r,"\alpha_i"]\& {G\x V_i} \arrow[dd,"\pr{2}"] \\
	\& {V_i\x[Y]Z} \arrow[rd," g_i \st f"]\& \\
	{G\x(V_i\x[Y] Z)}  \arrow[r,"\id{G}\x g_i \st f"] \arrow[ru,"\pr{2}"] \& {G\x V_i} \arrow[r,"\pr{2}"]\& {V_i.} 
\end{cd}
Hence, we have shown that $G\x (V_i\x{Y} Z)$ satisfies the universal property of the pullback $(P\x[Y]Z)\x[Z] (V_i\x [Y] Z)$ and so there exists an isomorphism
$$\phi_i\: (P\x[Y]Z)\x[Z] (V_i\x [Y] Z) \to G\x (V_i\x[Y] Z).$$
We now have to prove that $\phi_i$ is $G$-equivariant, i.e.\ that the following diagram is commutative
\sq{G\x((P\x[Y]Z)\x[Z] (V_i\x [Y] Z))}{(P\x[Y]Z)\x[Z] (V_i\x [Y] Z)}{G\x (G\x (V_i\x[Y] Z))}{G\x (V_i\x[Y] Z),}{\psi_i}{\id{G}\x \phi_i}{\phi_i}{\theta_i}
where $\psi_i\: G\x((P\x[Y]Z)\x[Z] (V_i\x [Y] Z)) \to (P\x[Y]Z)\x[Z] (V_i\x [Y] Z)$ is the action defined using Construction~\ref{pullact}. We want to use the universal property of the pullback satisfied by $G\x(V_i\x[Y] Z)$ and so we start considering the post-composition with $h_i$ and we obtain the following  diagram:
\begin{cd}
	{G\x((P\x[Y]Z)\x[Z] (V_i\x [Y] Z)) } \arrow[r,"\psi_i"] \arrow[dd,"\id{G}\x \phi_i"']  \arrow[rd,bend right=10,"\id{G}\x (f\st \pi_{P})\st(f\st g_i)"]\&{(P\x[Y]Z)\x[Z] (V_i\x [Y] Z)} \arrow[r,"\phi_i"] \arrow[rdd,bend left=10,"(f\st \pi_{P})\st(f\st g_i)"] \& {G\x (V_i\x[Y] Z)} \arrow[dd,"h_i"]\\
	\& {G\x (P\x[Y] Z)} \arrow[rd,"\psi"]\& \\
    {G\x (G\x (V_i \x[Y] Z))} \arrow[r,"\theta_i"']	\& {G\x (V_i \x[Y] Z)} \arrow[r,"h_i"] \& {P\x[Y] Z.}
\end{cd}
    We notice that the central and the right diagrams commute by the definitions of $\psi_i$ and $\phi_i$ and so it suffices to prove that the left diagram commutes. To do so we can use the universal property of $P\x[Y] Z$. The following commutative diagrams show that both the post-composition with $f\st \pi_P$ and $\pi_{P}\st f$ work:
    \begin{cds}{7}{2}
    	{G\x((P\x[Y]Z)\x[Z] (V_i\x [Y] Z))} \arrow[rr,"\id{G}\x((f\st \pi_P)\st(f\st g))"] \arrow[dd,"\id{G}\x \phi_i"']\&  \&{G\x(P\x[Y]Z)} \arrow[r,"\psi"] \arrow[d,"\id{G}\x f\st \pi_P"] \arrow[dd,bend right=50,"\pr{2}"']\& {P\x[Y] Z} \arrow[dd,"f\st \pi_{P}"]\\
    	\& \& {G\x Z} \arrow[rd,"\pr{2}"]\& \\
    	{G\x( G\x (V_i\x[Y] Z))} \arrow[r,"\theta_i"'] \arrow[rruu,"\id{G}\x h_i"]\& {G\x(V_i\x[Y]Z)} \arrow[r,"h_i"']\& {P\x[Y] Z} \arrow[r,"f\st \pi_{P}"']\& {Z}
    \end{cds}
    \begin{cds}{7}{2}
    \hspace{-2.8ex}	{G\x((P\x[Y]Z)\x[Z] (V_i\x [Y] Z))} \arrow[rr,"\id{G}\x((f\st \pi_P)\st(f\st g))"] \arrow[dddd,"\id{G}\x \phi_i"'] \&[-2.7ex]  \&{G\x(P\x[Y]Z)} \arrow[r,"\psi"] \arrow[d,"\id{G}\x \pi_{P}\st f"] \& {P\x[Y] Z} \arrow[dddd,"\pi_{P}\st f"]\\
    	\& \& {G\x P} \arrow[rddd,bend left=15,"\rho"]\& \\
    	\& {G\x (P\x[Y] V_i)} \arrow[ru,"\id{G}\x f\st g_i"] \arrow[r,"\beta_i"]\& {P\x[Y] V_i} \arrow[rdd,"f\st g_i"]\& \\
    	\& {G\x (G\x V_i)} \arrow[u,"\id{G}\x \alpha_i^{-1}"] \arrow[r,"\gamma_i"]\& {G\x V_i} \arrow[u,"\alpha_i^{-1}"']\& \\
    	{G\x( G\x (V_i\x[Y] Z))} \arrow[ru,"\id{G}\x(\id{G}\x g_i\st f)"'] \arrow[rruuuu, bend left=20,"\id{G}\x h_i"]\arrow[r,"\theta_i"'] \& {G\x(V_i\x[Y]Z)}  \arrow[r,"h_i"']\arrow[ru,"\id{G}\x g_i\st f"']\& {P\x[Y] Z} \arrow[r,"\pi_{P}\st f"']\& {P,}
    \end{cds}
where $$\beta_i\: G\x(P\x[Y]V_i) \to P\x[Y] V_i$$ is the action given by Construction~\ref{pullact} and $$\gamma_i: G\x(G\x V_i) \to G\x V_i$$ is the action given by the internal product of $G$ on the product $G\x V_i$.

Now we consider the post-composition with $\pr{2}$ and we obtain the following commutative diagram
\begin{cd}
	{G\x((P\x[Y]Z)\x[Z] (V_i\x [Y] Z))} \arrow[r,"\psi_i"] \arrow[dd,"\id{G}\x \phi_i"'] \arrow[rd,"\id{G}\x((f\st g_i)\st(f\st \pi_{P}))"] \& {(P\x[Y]Z)\x[Z] (V_i\x [Y] Z)} \arrow[r,"\phi_i"] \arrow[rdd, "(f\st g_i)\st(f\st \pi_{P})"]\& {G\x (V_i\x [Y] Z)} \arrow[dd,"\pr{2}"]\\
	\&{G\x (V_i\x [Y] Z)}  \arrow[rd,"\pr{2}"]\& \\
	{G\x (G\x (V_i \x[Y] Z)} \arrow[ru,"\pr{2}"] \arrow[r,"\theta_i"]\&{G\x (V_i \x[Y] Z} \arrow[r,"\pr{2}"] \&{V_i\x[Y] Z.}  
\end{cd}
This shows that $\phi_i$ is $G$-equivariant and so we have proved that $\pi_{P\x[Y]Z}\: P\x[Y] Z \to Z$ is a principal $G$-bundle over $Z$. 
\end{proof}

We are now ready to prove that the quotient prestack $\qst{X}{G}$ is a pseudofunctor (see for instance \cite{Neumann} for a definition of pseudofunctor).

\begin{prop} \label{welldef}
	The quotient prestack $\qst{X}{G}$ of Definition~\ref{qp} is a pseudofunctor.
\end{prop}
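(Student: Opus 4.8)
The plan is to verify the three ingredients of a pseudofunctor in turn: that each $\qst{X}{G}(Y)$ is a category, that each $\qst{X}{G}(f)$ is a functor, and that the assignment respects composition and identities up to coherent natural isomorphism. Throughout, the universal property of pullbacks is the sole tool, providing both the induced maps and—via its uniqueness clause—every equality needed.

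First I would observe that $\qst{X}{G}(Y)$ is a genuine category, since its morphisms are morphisms of principal bundles compatible with the maps to $X$, and composition and identities are inherited from $\C$. The substantial content lies in checking that $\qst{X}{G}(f)$ is a well-defined functor. On objects, $(P,\alpha)$ is sent to $(P\x[Y]Z,\alpha\c\pi_P\st f)$; here $P\x[Y]Z$ is a principal $G$-bundle over $Z$ by Proposition~\ref{pullbun}, and $\alpha\c\pi_P\st f$ is $G$-equivariant because $\pi_P\st f$ is $G$-equivariant by the definition of the action in Construction~\ref{pullact} and $\alpha$ is $G$-equivariant by hypothesis, so their composite is again equivariant. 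On morphisms, $\qst{X}{G}(f)(\phi)$ is defined by the universal property of $Q\x[Y]Z$, and I would check that it is indeed a morphism in $\qst{X}{G}(Z)$, namely that it commutes with the bundle projections to $Z$ and with the maps to $X$; each of these follows by post-composing with the two projections of $Q\x[Y]Z$ and invoking the uniqueness clause. Strict preservation of identities and of composition by the fixed functor $\qst{X}{G}(f)$ then follows from the same uniqueness clause.

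Next I would produce the coherence data. For composable morphisms $Z'\ar{g}Z\ar{f}Y$, the two objects $\qst{X}{G}(g)(\qst{X}{G}(f)(P,\alpha))=\bigl((P\x[Y]Z)\x[Z]Z',\,-\bigr)$ and $\qst{X}{G}(f\c g)(P,\alpha)=(P\x[Y]Z',-)$ are compared by the canonical isomorphism $(P\x[Y]Z)\x[Z]Z'\aiso P\x[Y]Z'$ coming from the pasting law for pullbacks. I would verify that this canonical isomorphism is a morphism in $\qst{X}{G}(Z')$, namely that it commutes with the bundle projections to $Z'$ and identifies the two induced maps to $X$; as before, each claim reduces via the universal property to an identity between composites of pullback projections. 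Naturality of this family in $(P,\alpha)$, together with the unitor isomorphisms built from the canonical $P\x[Y]Y\aiso P$, follows in the same manner.

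The main obstacle, and the most laborious part, is the coherence. I must check the associativity (pentagon) condition comparing the two composite isomorphisms relating $\qst{X}{G}(h)\,\qst{X}{G}(g)\,\qst{X}{G}(f)$ with $\qst{X}{G}(f\c g\c h)$, and the two unit (triangle) conditions. Each of these is an equality of isomorphisms into an iterated pullback, so it holds by the uniqueness part of the relevant universal property; the genuine work is the bookkeeping of tracking the actions of Construction~\ref{pullact} and the maps to $X$ through every canonical comparison, so that each side is confirmed to be a morphism of the correct type in $\qst{X}{G}(-)$. Since pullbacks are determined only up to canonical isomorphism, these comparisons cannot in general be made into equalities, which is precisely why $\qst{X}{G}$ is a pseudofunctor and not a strict functor.
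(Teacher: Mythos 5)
Your proposal is correct and follows essentially the same route as the paper: well-definedness on objects via Proposition~\ref{pullbun} and equivariance of $\alpha\c\pi_P\st f$, functoriality of each $\qst{X}{G}(f)$ via the uniqueness clause of the pullback's universal property, and the unitors and compositors $\iota_Y$, $\varepsilon_{f,g}$ as the canonical isomorphisms between pullbacks of the same cospan. You are in fact slightly more explicit than the paper about the associativity and unit coherence axioms, which the paper subsumes under ``the pseudofunctoriality of the pullback,'' but the argument is the same.
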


\begin{proof}
	The fact that $\qst{X}{G}$ is well-defined on objects follows from the previous Proposition and straightforward observations.
	
	Let $f\: Z\to Y$ be a morphism in $\C$. We have to show that the assignment
	$$\qst{X}{G}(f)\: \qst{X}{G}(Y) \to \qst{X}{G}(Z)$$
	 of Definition~\ref{qp} is a functor. 
	 
	 Let $(P,\alpha)$ be an object of $\qst{X}{G}(Y)$.
	By Proposition~\ref{pullbun}, we have that $P\x[Y] Z$ is a principal $G$-bundle over $Z$ with morphism $\pi_{P}\x[Y] Z:= f\st \pi_{P}$. Moreover, the morphism $\alpha\c \pi_{P}\st f\: P\x[Y] Z\to X$ is $G$-equivariant since the following diagram is commutative
		\begin{cd}
			{G\x (P\x[Y]Z)} \arrow[r,"\psi"] \arrow[d,"\id{G}\x \pi_{P}\st f"']\& {P\x[Y] Z} \arrow[d,"\pi_{P}\st f"]\\
			{G\x P}  \arrow[r,"p"] \arrow[d,"\id{G}\x \alpha"']\& {P} \arrow[d,"\alpha"'] \\
			{G\x X} \arrow[r,"x"'] \& {X,}
		\end{cd}
	where $\psi$ is the action of $G$ on the pullback $P\x[Y]Z$ defined as in Construction~\ref{pullact}. Then we have shown that $(P\x[Y]Z, \alpha \c \pi_{P}\st f)$ is an object of $\qst{X}{G}(Y)$ and so $\qst{X}{G}(f)$ is well-defined on objects. To prove that it is well-defined on morphisms as well, we need to show that given a morphism $\phi\: (P, \alpha) \to (Q, \beta)$ in $\qst{X}{G}(Y)$, the morphism $\qst{X}{G}(f)(\phi)$ given by the universal property of pullback as in the following diagram
	\pbsqunivv{P\x[Y]Z}{\phi\c \pi_P\st f}{\pi_P}{\qst{X}{G}(f)(\phi)}{Q\x[Y]Z}{Q}{Z}{Y,}{\pi_Q \st f}{f\st \pi_Q}{\pi_Q}{f}
	is a morphism in $\qst{X}{G}[Z]$. But this is true by definition and it remains to prove just that $\qst{X}{G}(f)$ preserves identities and compositions.
	
	Let $(P, \alpha)$ be an object of $\qst{X}{G}(Y)$. Then the morphism $\qst{X}{G}(f)(\id{(P,\alpha)})$ is the unique morphism such that the following diagram is commutative
	\pbsqunivv{P\x[Y]Z}{\pi_{P}\st f}{f\st \pi_{P}}{\qst{X}{G}(f)(\id{(P,\alpha)})}{P\x[Y] Z}{P}{Z}{Y.}{\pi_{P}\st f}{f\st \pi_{P}}{\pi_{P}}{f}
	But the morphism $\id{(P\x[Y]Z,\id{(P,\alpha)}\c \pi_{P}\st f)}\: P\x[Y] Z \to P\x[Y] Z $ makes the diagram commute too and so $\qst{X}{G}(f)(\id{(P,\alpha)})=\id{(P\x[Y]Z,\id{(P,\alpha)}\c \pi_{P}\st f)}$ and we have shown that $\qst{X}{G}(f)$ preserves identities.
	
	Let now $\phi\: (P,\alpha) \to (Q,\beta)$ and $\psi\: (Q,\beta) \to (R,\delta)$ be morphisms in $\qst{X}{G}(Y)$. The morphism $\qst{X}{G}(f)(\psi \c \phi)$ is the unique morphism such that the following diagram is commutative
	\pbsqunivv{P\x[Y]Z}{(\psi \c\phi)\c \pi_P\st f}{\pi_P}{\qst{X}{G}(f)(\psi \c \phi)}{R\x[Y]Z}{R}{Z}{Y}{\pi_R\st f}{f\st \pi_R}{\pi_R}{f}
	and so to prove the equality $\qst{X}{G}(f)(\psi \c \phi)=\qst{X}{G}(f)(\psi) \c \qst{X}{G}(f)(\phi)$ it suffices to show that the composite $\qst{X}{G}(f)(\psi) \c \qst{X}{G}(f)(\phi)$ makes the diagram commute as well.
	 We notice that the morphism $\qst{X}{G}(f)(\psi) \c \qst{X}{G}(f)(\phi)$ makes the upper triangle of the above diagram commute, since we have the following commutative diagram
	 \begin{cd}
	 	{P\x[Y]Z} \arrow[r,"\pi_{P}\st f"] \arrow[d,"\qst{X}{G}(f)(\phi)"']\& {P} \arrow[r,"\phi"] \arrow[d,"\phi"]\& {Q} \arrow[r,"\psi"]\& {R} \arrow[dd,equal,""]\\
	 	{Q\x[Y] Z} \arrow[r,"\pi_{Q} \st f"] \arrow[d,"\qst{X}{G}(f)(\psi)"']\& {Q} \arrow[rrd,"\psi"]\&\& \\
	 	{R\x[Y] Z} \arrow[rrr,"\pi_{R}\st f"]\& \& \& {R.}
	 \end{cd}
 Finally, the morphism $\qst{X}{G}(f)(\psi) \c \qst{X}{G}(f)(\phi)$ makes the lower triangle of the diagram commute because the following diagram is commutative
 \begin{cds}{9}{11}
 	{P\x[Y]Z} \arrow[r,"\qst{X}{G}(f)(\phi)"] \arrow[rrd,"f\st \pi_{P}"']\& {Q\x[Y] Z} \arrow[r,"\qst{X}{G}(f)(\psi)"] \arrow[rd,"f\st \pi_{Q}"]\& {R\x[Y]Z} \arrow[d,"f\st \pi_{R}"]\\
 	\& \& {Z}
 \end{cds}
	and so we have shown that $\qst{X}{G}(f)$ preserves compositions. This concludes the proof that $\qst{X}{G}(f)$ is a functor.
To prove that $\qst{X}{G}$ is a prestack we now need to show that it is pseudofunctorial.
Let $Y$ be an object of $\C \op$. We have to show that there exists an invertible natural transformation
$$\iota_{Y}\: \qst{X}{G} (\id{Y}) \to \id{\qst{X}{G}(Y)}.$$
Given $(P,\alpha)\in \qst{X}{G}(Y)$, we define $(\iota_Y)_{(P,\alpha)}\: (P\x[Y]Y,\alpha\c \pi_P\st \id{Y}
)\to (P,\alpha)$ as the unique isomorphism from $P\x[Y]Y$ to $P$ that exists because they are both pullbacks of the pair $(\pi_P, \id{Y})$. It is then straightforward to check that $\iota_{Y}$is a natural transformation.

Let now $g\: W\to Z$ and $f\: Z\to Y$ be morphisms in $\C$. We have to show that there exists an invertible natural transformation
$$\varepsilon_{f,g}\: \qst{X}{G} (f\c g) \to \qst{X}{G}(g) \c \qst{X}{G}(f).$$
Given $(p,\alpha)\in \qst{X}{G}(Y)$, we define $$(\varepsilon_{f,g})_{(P,\alpha)}\: (P\x[Y]W,\alpha\c \pi_P\st (f\c g)
)\to ((P\x[Y]Z)\x[W]Z,\alpha\c \pi_P\st f \c \pi_{P\x[Y]Z}\st g)$$
as the unique isomorphism from $P\x[Y]Z$ to $(P\x[Y]Z)\x[W]Z$ (both pullbacks of the pair $(\pi_P, f\c g)$). It is straightforward to check that $(\varepsilon_{f,g})_{(P,\alpha)}$ is a morphism in $\qst{X}{G}[W]$ and the fact that $\varepsilon_{f,g}$ is a natural transformation follows, then, from the pseudofunctoriality of the pullback. This proves that $\qst{X}{G}$ is a prestack.
\end{proof}

\begin{proof}
	To prove that $\qst{X}{G}$ is a prestack we need to show that it is a pseudofunctor.
	Let $Y$ be an object of $\C \op$. We have to show that there exists an invertible natural transformation
	$$\iota_{Y}\: \qst{X}{G} (\id{Y}) \to \id{\qst{X}{G}(Y)}.$$
	 Given $(P,\alpha)\in \qst{X}{G}(Y)$, we define $(\iota_Y)_{(P,\alpha)}\: (P\x[Y]Y,\alpha\c \pi_P\st \id{Y}
	)\to (P,\alpha)$ as the unique isomorphism from $P\x[Y]Y$ to $P$ that exists because they are both pullbacks of the pair $(\pi_P, \id{Y})$. It is then straightforward to check that $\iota_{Y}$is a natural transformation.
	
	Let now $g\: W\to Z$ and $f\: Z\to Y$ be morphisms in $\C$. We have to show that there exists an invertible natural transformation
	$$\varepsilon_{f,g}\: \qst{X}{G} (f\c g) \to \qst{X}{G}(g) \c \qst{X}{G}(f).$$
	Given $(p,\alpha)\in \qst{X}{G}(Y)$, we define $$(\varepsilon_{f,g})_{(P,\alpha)}\: (P\x[Y]W,\alpha\c \pi_P\st (f\c g)
	)\to ((P\x[Y]Z)\x[W]Z,\alpha\c \pi_P\st f \c \pi_{P\x[Y]Z}\st g)$$
	as the unique isomorphism from $P\x[Y]Z$ to $(P\x[Y]Z)\x[W]Z$ (both pullbacks of the pair $(\pi_P, f\c g)$). It is straightforward to check that $(\varepsilon_{f,g})_{(P,\alpha)}$ is a morphism in $\qst{X}{G}[W]$ and the fact that $\varepsilon_{f,g}$ is a natural transformation follows, then, from the pseudofunctoriality of the pullback. This proves that $\qst{X}{G}$ is a prestack.
	\end{proof}

Analogously to the topological case, we can consider the particular case of $[T/G]$ where $T$ is the terminal object of $\C$ and so the  morphisms with target $T$ are uniquely determined and the category $[T/G](Y)$ is isomorphic to $\Bun{G}{Y}$. 

\begin{defne} \label{class}
	Let $\C$ be a category with terminal object $T$ and let $G$ be a group object in $\C$. The prestack $[T/G]$ is called \dfn{classifying stack} and will be denoted $\clst{G}$.
\end{defne}

\begin{oss} 
	If we consider the category $\Top$ equipped with the standard topology the terminal object is the one-point space and the classifying stacks over $\Top$ are the usual classifying stacks of topological groups.
	\end{oss}

\section{Generalized quotient stacks}\label{st}
In this section we will prove that, if the category $\C$ is cocomplete such that pullbacks preserve colimits and the topology $\tau$ is subcanonical, the quotient prestacks defined in the previous section are stacks. In order to prove this important result, we will show that the quotient prestacks satisfy the gluing conditions required for a stack with respect to the canonical topology.

Firstly, we recall the definition of \predfn{canonical topology} on a category (for a reference see \cite{Lester1}).
\begin{defne}
	Let $\C$ be a category. The \dfn{canonical topology} $\kappa$ on the category $\C$ is the finest Grothendieck topology on $\C$ such that every representable presheaf of $\C$ is a sheaf on the site $(\C,\kappa)$.
	A Grothendieck topology $\tau$ on $\C$ is said \dfn{subcanonical} if it is contained in $\kappa$, i.e.\ if every representable presheaf of $\C$ is a sheaf on the site $(\C,\tau)$.
\end{defne}

A very useful characterization of the sieves of the canonical topology has been mentioned in \cite{Elephant} and studied in detail by Lester in \cite{Lester1}. Lester has shown that the canonical topology is the collection of all \predfn{universal colim sieves} in the sense of the following definition.

\begin{defne}
	Let $\C$ be a category and let $X$ be an object of $\C$. A sieve $S$ on $X$ is a \dfn{colim sieve} if $X=\opn{colim}_S{\opn{dom}}$, where $\opn{dom}\: \C/X \to \C$ is the domain functor. Moreover, $S$ is a \dfn{universal colim sieve} if for every morphism $f\: Y\to X$ in $\C$ the sieve $f\st S$ on $Y$ is a colim sieve.
\end{defne}

In \cite{Lester1}, Lester also gives a basis for the canonical topology under the hypothesis that the category $\C$ is cocomplete, has all pullbacks and has stable and disjoint coproducts. The result is the following:

\begin{teor}[\cite{Lester1}]
	Let $\C$ be a cocomplete category with stable and disjoint coproducts and all pullbacks. Then the family $\fami{f_i\: Y_i \to X}{i\in I}$ is in $\kappa(X)$ if and only if the morphism 
	$$\underset{i\in I}{\coprod} f_i \: \underset{i\in I}{\coprod} Y_i \to X$$
	is a universal effective epimorphism (i.e. $\underset{i\in I}{\coprod} f_i$ is the coequalizer of its kernel pair and the change of basis of $\underset{i\in I}{\coprod} f_i$ along every morphism of $\C$ has the same property).
	\end{teor}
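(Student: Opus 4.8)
The plan is to reduce the statement to the characterization of $\kappa$ as the collection of universal colim sieves recalled above. Writing $S$ for the sieve on $X$ generated by the family $\fami{f_i\: Y_i\to X}{i\in I}$, to say that the family lies in $\kappa(X)$ is to say that $S$ is a universal colim sieve. Since both the notion of universal colim sieve and that of universal effective epimorphism are obtained from their non-universal versions by requiring stability under arbitrary base change, I would first establish the unstable equivalence — that $S$ is a colim sieve if and only if $q\deq\coprod_{i} f_i$ is an effective epimorphism — and then promote it to the universal statement.

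The heart of the argument is the computation of $\colim_S\dom$. Since every object of $S$ is by definition a morphism into $X$, there is a canonical cocone under $\dom$ with vertex $X$, and $S$ is a colim sieve precisely when the induced comparison morphism $\colim_S\dom\to X$ is an isomorphism. I would then reduce this colimit over the whole sieve to a coequalizer by showing that the subdiagram spanned by the generators $(Y_i,f_i)$ and their pairwise fibre products $Y_i\x[X]Y_j$, together with the two projections, is cofinal in $S$; this yields $\colim_S\dom\cong\opn{coeq}\bigl(\coprod_{i,j}Y_i\x[X]Y_j\rightrightarrows\coprod_i Y_i\bigr)$. It is precisely here that disjointness and stability of the coproducts of $\C$ (extensivity) are used, and where the infinitary index set must be handled with care. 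Finally, stability of coproducts under pullback identifies $\coprod_{i,j}Y_i\x[X]Y_j$ with $(\coprod_i Y_i)\x[X](\coprod_i Y_i)$ — the kernel pair of $q$ — compatibly with the two projections, so that $\colim_S\dom$ is the coequalizer of the kernel pair of $q$. Hence the comparison map to $X$ is an isomorphism exactly when $q$ is the coequalizer of its own kernel pair, i.e.\ an effective epimorphism.

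To obtain the universal version I would check that every construction above commutes with base change. For a morphism $g\: W\to X$, the pullback sieve $g\st S$ is generated by the family $\fami{g\st f_i\: Y_i\x[X]W\to W}{i\in I}$, and stability of coproducts gives a canonical isomorphism $\coprod_i(Y_i\x[X]W)\cong(\coprod_i Y_i)\x[X]W$ under which the structure map to $W$ becomes the base change of $q$ along $g$. Applying the unstable equivalence over $W$ shows that $g\st S$ is a colim sieve if and only if this base change of $q$ is an effective epimorphism. Ranging over all $g$ then gives that $S$ is a universal colim sieve if and only if $q$ is a universal effective epimorphism, which is exactly the assertion.

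I expect the cofinality reduction of the second paragraph to be the main obstacle: expressing $\colim_S\dom$ as the coequalizer of the two projections out of $\coprod_{i,j}Y_i\x[X]Y_j$ is where the hypotheses of disjoint and stable coproducts genuinely enter, and one must ensure that the argument passes correctly from the generating presieve to the full sieve and remains valid for an arbitrary, possibly infinite, index set $I$. Once this colimit computation is secured, the identification with the kernel pair and the transition to the universal statement are formal consequences of extensivity.
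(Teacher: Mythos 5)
The paper gives no proof of this statement for you to be measured against: it is recalled verbatim from Lester \cite{Lester1} and used as a black box, so the only meaningful comparison is with Lester's own argument, which your outline essentially reconstructs. The plan is correct: reduce to the universal-colim-sieve description of $\kappa$, compute $\colim_S\opn{dom}$ for the generated sieve $S$ as a coequalizer via a finality argument, identify the parallel pair with the kernel pair of $q=\coprod_i f_i$, and then base-change everything using $g^{\ast}S=\langle g^{\ast}f_i\rangle$. Two refinements on the step you rightly identify as the crux. First, the finality of the inclusion into $S$ of the diagram on the $Y_i$ and the $Y_i\times_X Y_j$ needs only the existence of pullbacks, not extensivity: the comma categories are nonempty because $S$ is generated by the $f_i$, and connected because two factorizations $g=f_i\circ h=f_j\circ h'$ are linked through $(h,h')\colon Z\to Y_i\times_X Y_j$ --- take care to include the case $i=j$ with two distinct factorizations, which is handled by $Y_i\times_X Y_i$ and is easy to overlook. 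The hypotheses of stable coproducts enter afterwards, to rewrite the colimit over that diagram as $\opn{coeq}\bigl(\coprod_{i,j}Y_i\times_X Y_j\rightrightarrows\coprod_i Y_i\bigr)$, to identify $\coprod_{i,j}Y_i\times_X Y_j$ with the kernel pair of $q$ compatibly with the two projections, and to make the whole computation commute with base change in the universal step; disjointness does not visibly intervene in this particular biconditional (Lester needs it for the surrounding basis axioms), so you should not lean on it where stability alone is doing the work. With the finality check and the $i=j$ case written out, the rest of your reduction is the standard route and is sound.
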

	
Before stating the main theorem of this section, we recall a simple well-known fact that will be useful in the proof of the theorem:
\begin{lemma} 
	\label{colimnat}
	Let  $F,G\: \D \to \C$ be functors and let $\gamma\: G \Rightarrow F$ be a natural transformation. Consider $C\in \C$. If  $\fami{F(i)\ar{f_i} C}{i\in I}$ is a cocone for $F$ over $C$, then $\fami{G(i) \ar{f_i\c \gamma_i} C}{i\in I}$ is a cocone for $G$ over $C$.
\end{lemma}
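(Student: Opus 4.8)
The plan is to check the defining condition of a cocone directly, feeding the naturality squares of $\gamma$ into the triangles that witness the given cocone for $F$.

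Recall that the assertion that $\fami{F(i) \ar{f_i} C}{i\in I}$ is a cocone for $F$ over $C$ means precisely that for every morphism $u\: i \to j$ in $\D$ one has $f_j \c F(u) = f_i$. To show that $\fami{G(i) \ar{f_i\c \gamma_i} C}{i\in I}$ is a cocone for $G$ over $C$, I must verify the analogous identity $(f_j \c \gamma_j)\c G(u) = f_i \c \gamma_i$ for every such $u$.

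First I would invoke the naturality of $\gamma\: G \Rightarrow F$ at the morphism $u$, which gives the commutative square expressing $\gamma_j \c G(u) = F(u) \c \gamma_i$. Substituting this into the left-hand side and then using the cocone condition for $F$ yields
\[
(f_j \c \gamma_j)\c G(u) = f_j \c (\gamma_j \c G(u)) = f_j \c (F(u) \c \gamma_i) = (f_j \c F(u)) \c \gamma_i = f_i \c \gamma_i,
\]
as required. Pictorially this is nothing more than pasting the naturality square of $\gamma$ underneath the triangle witnessing that $\fami{f_i}{i\in I}$ is a cocone for $F$.

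Since the argument is a one-line diagram chase, there is no genuine obstacle; the only point that requires care is the orientation of the naturality square, namely that $\gamma$ runs from $G$ to $F$, so that it is $\gamma_j \c G(u)$ (and not $G(u)\c\gamma_j$) that gets rewritten as $F(u)\c\gamma_i$. With the composites written in the order above, the substitution goes through verbatim.
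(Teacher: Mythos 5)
Your proof is correct and follows exactly the route the paper intends: the paper's own proof is just the one-line remark ``straightforward using the naturality of $\gamma$,'' and your diagram chase $(f_j \c \gamma_j)\c G(u) = f_j \c F(u) \c \gamma_i = f_i \c \gamma_i$ is precisely the computation being alluded to. Nothing further is needed.
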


\begin{proof}
	Straightforward using the naturality of $\gamma$.
\end{proof}

We are now ready to prove the main theorem of this section. 

\begin{teor}\label{arestacks}
	Let $\C$ be a  cocomplete category with pullbacks and a terminal object and such that pullbacks preserve colimits. Let then $\tau$ be a subcanonical Grothendieck topology on $\C$ and $X,G\in \C$  with $G$ a group object. Then the quotient prestack $[X/G]$ is a stack.
\end{teor}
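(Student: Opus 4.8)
The plan is to reduce to a single covering sieve and exploit Lester's description of the canonical topology. Since $\tau$ is subcanonical we have $\tau\subseteq\kappa$, so every $\tau$-covering sieve $S$ on an object $Y$ is a universal colim sieve; in particular $Y=\colim_S\dom$ and, because pullbacks preserve colimits, this presentation is stable under base change, i.e.\ for every $h\:Y'\to Y$ the pulled-back sieve $h\st S$ again satisfies $Y'=\colim_{h\st S}\dom$. I would fix such a sieve $S$ and verify the three defining conditions of a stack for it. Throughout, I would write an object of $\qst{X}{G}(Z)$ as a pair $(P,\alpha)$ with $\pi_P\:P\to Z$ a principal $G$-bundle and $\alpha\:P\to X$ equivariant.

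The backbone of the argument is the following consequence of the hypothesis that pullbacks preserve colimits: for any morphism $q\:P\to Y$ and any colim sieve $S$ on $Y$ one has $P\cong\colim_{f\in S}(P\x[Y]Z)$, the colimit of the pullbacks $P\x[Y]Z$ along the members $f\:Z\to Y$ of $S$. Indeed $P=P\x[Y]Y=P\x[Y]\colim_S\dom=\colim_S(P\x[Y]\dom)$. This immediately yields the two morphism conditions. For uniqueness of gluings, two morphisms $P\to Q$ over $Y$ and $X$ that agree after restriction to every member of a covering family agree after restriction to every $f\in S$, hence agree on each $P\x[Y]Z$, hence agree since $P$ is their colimit. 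For gluing of morphisms, a compatible family $\phi_i\:\restr{x}{U_i}\to\restr{y}{U_i}$ extends by restriction to a family indexed by all of $S$, and the universal property of the colimit $P=\colim_S(P\x[Y]Z)$ produces the glued morphism; that it is $G$-equivariant and commutes with the maps to $X$ can be checked after restriction, since $G\x P=\colim_S(G\x(P\x[Y]Z))$ (the functor $G\x-$ is a pullback over the terminal object and so preserves colimits) and $X$ receives a unique compatible cocone.

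For effectiveness, which is the heart of the proof, I would start from a descent datum $(W_f,\phi^{f,g})$ with $W_f=(P_f,\alpha_f)$. The isomorphisms $\phi^{f,g}$ turn the assignment $f\mapsto P_f$ into a genuine functor on $S$: a morphism in $S$ from $f\:Z\to Y$ to $f'\:Z'\to Y$ is a map $g\:Z\to Z'$ with $f'\c g=f$, and $P_f\cong g\st P_{f'}\to P_{f'}$ supplies the transition map, the cocycle identity guaranteeing functoriality. I then set $P\deq\colim_{f\in S}P_f$ and equip it with the colimit $G$-action (again using that $G\x-$ preserves colimits), the projection $\pi_P\:P\to Y$ induced by the cocone $P_f\to Z\ar{f}Y$, and the equivariant map $\alpha\:P\to X$ induced by the $\alpha_f$, which form a compatible cocone precisely because the $\phi^{f,g}$ are morphisms in $\qst{X}{G}$. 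The essential point is then to identify the restriction: for each $f\:Z\to Y$ in $S$ one must show $P\x[Y]Z\cong P_f$ compatibly with all the structure. This follows by computing $P\x[Y]Z=\colim_{f'\in S}(P_{f'}\x[Y]Z)$ and using that $f\st S$ is again a colim sieve to recognise the right-hand side as the colimit presentation of $P_f$ over $Z$; the isomorphisms $\psi^f$ of the definition of effectiveness are read off from this identification.

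I expect the identification $P\x[Y]Z\cong P_f$, together with the verification of local triviality of $\pi_P$, to be the main obstacle. Local triviality is then inherited: each $\pi_f\:P_f\to Z$ is locally trivial, so admits a $\tau$-cover trivialising it, and composing with $S$ produces a $\tau$-cover of $Y$ along which $P\x[Y]Z\cong P_f$ becomes trivial; this is exactly where subcanonicity (to know that the members of $S$ give colim sieves stable under pullback) and cocompleteness with colimit-preserving pullbacks are used together. The remaining bookkeeping --- that the colimit cocones are compatible with the $G$-actions and with the equivariant maps to $X$, and that the two morphism conditions interact correctly with the pseudofunctor structure constructed in Proposition~\ref{welldef} --- is routine but must be carried out carefully, since every diagram lives one level up, over the pullbacks $P\x[Y]Z$ rather than over $Y$ itself.
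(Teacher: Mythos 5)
Your proposal is correct and follows essentially the same route as the paper: reduce to universal colim sieves via Lester's description of the canonical topology, handle the two morphism conditions by writing $P$ as a colimit of its pullbacks along the sieve (the paper phrases this through the coproduct/effective-epimorphism basis, but it is the same mechanism), and prove effectiveness by turning the descent datum into a functor on the sieve, taking its colimit, and identifying the restrictions using that pullbacks preserve colimits. The only notable difference is that you flag and sketch the verification that the glued object is locally trivial for $\tau$ itself, a point the paper absorbs into its remark on cocompleteness of $[X/G](Y)$ and its reduction to the canonical topology.
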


\begin{oss}
	Notice that every cocomplete locally cartesian closed category with a terminal object satisfies the hypothesis of the theorem. For example, every cocomplete quasitopos in the sense of \cite{Penon} satisfies the hypothesis of the theorem. This includes all cocomplete elementary topoi, but also other interesting cases such as cocomplete Heyting Algebras (see \cite{Heyting}) and the category of diffeological spaces originally introduced by Souriau in \cite{Souriau}.
\end{oss}

\begin{proof}
	We are going to show that $[X/G]$ is a stack when we have the canonical topology $\kappa$ on $\C$ and this will imply that it is a stack every time the considered topology is subcanonical, since the principal bundles with respect to a fixed subcanonical topology are principal bundles in $(\C,k)$ as well. 
	
	In order to show that $[X/G]$ is a stack, we will show that the three gluing conditions hold.
	
	Consider a covering family $\mathcal{U}=\fami{f_i\: U_i \to Y}{i\in I}$ of $Y$ for the canonical topology $\kappa$.
	
	Let $(P,\alpha), (Q, \beta) \in [X/G](Y)$ and for every $i\in I$ let $\phi_i\: \restr{ (P,\alpha)}{U_i} \to \restr{ (Q,\alpha)}{U_i} $ be a morphism of $\C$ such that for every $i,j\in I$ we have $\restr{\phi_i}{U_{ij}} = \restr{\phi_j}{U_{ij}}$ (where $U_{ij}$ is given by the pullback of $f_i$ and $f_j$). We want to show that there exists a morphism $\eta\: (P,\alpha) \to (Q,\beta)$ such that $\restr{\eta}{U_i}=\phi_i$ for every $i\in I$.
	
	Let $K$ the pullback given as
	\pbsq{K}{\underset{i\in I}{\coprod}P\x[Y]U_i}{\underset{i\in I}{\coprod}P\x[Y]U_i}{P}{\gamma_1}{\gamma_2}{\underset{i\in I}{\coprod}\pi_P \st f_i }{\underset{i\in I}{\coprod}\pi_P \st f_i}
	 and consider the morphism 
	$$\delta := \underset{i\in I}{\coprod}(\pi_Q\st f_i \c \phi_i)\: \underset{i\in I}{\coprod} P\x[Y] U_i \to Q.$$
	Moreover, we notice that, since pullbacks preserve colimits in $\C$ and $\underset{i\in I}{\coprod} f_i $ is a universal effective epimorphism, the morphism $\underset{i\in I}{\coprod} \pi_P \st f_i $ is an effective epimorphism and so the following is a coequalizer diagram
	\coequalizer{K}{\underset{i\in I}{\coprod}P\x[Y]U_i}{P.}{\gamma_1}{\gamma_2}{\underset{i\in I}{\coprod} \pi_P \st f_i }
	In order to use the universal property of this coequalizer to produce a morphism $\eta\: P \to Q$ we show that the morphism $\delta\: \underset{i\in I}{\coprod} P\x[Y] U_i \to Q$ previously defined coequalizes $\gamma_1$ and $\gamma_2$.
	Since pullbacks preserve colimits, we have that $K$ is isomorphic to the coproduct $\underset{i\in I}{\coprod}\underset{j\in I}{\coprod}(P\x[Y]U_i)\x[P](P\x[Y]U_j)$ and so, to prove that $\delta$ coequalizes $\gamma_1$ and $\gamma_2$, it suffices to prove that, for every $i\in I$ and every $j\in J$, the following diagram is commutative
	\begin{cd}
		{(P\x[Y]U_i)\x[P](P\x[Y]U_j)} \arrow[r,""] \arrow[d,""]\& {P\x[Y]U_j} \arrow[r,"\phi_j"] \& {Q\x[Y]U_j} \arrow[d,"\pi_Q\st f_j"]\\
		 {P\x[Y]U_i} \arrow[r,"\phi_i"']\& {Q\x[Y]U_i}  \arrow[r,"\pi_Q\st f_i"']\& {Q.}
	\end{cd}
But the commutativity of this diagram follows from the fact that  $\restr{\phi_i}{U_{ij}}=\restr{\phi_j}{U_{ij}}$, using that $(P\x[Y]U_i)\x[P](P\x[Y]U_i)$ is isomorphic to $(P\x[Y]U_i)\x[U_i](U_i\x[Y]U_j)$ and $(Q\x[Y]U_i)\x[P](P\x[Y]U_i)$ is isomorphic to $(Q\x[Y]U_i)\x[U_i](U_i\x[Y]U_j)$. This shows that $\delta$ coequalizes $\gamma_1$ and $\gamma_2$ and so, by the universal property of the coequalizer, there exists a unique morphism $\eta\: P \to Q$  such that
	\begin{cd}
	{K}\arrow[r,shift left=0.8ex,"{\gamma_1}"]\arrow[r,shift right=0.8ex,"{\gamma_2}"']\& {\underset{i\in I}{\coprod}P\x[Y]U_i} \arrow[r,"{\underset{i\in I}{\coprod}}\pi_P \st f_i"] \arrow[rd,"\delta",shorten=-0.8ex, bend right=10]\&{P} \arrow[d,"\eta", dashed] \\
	{} \& {} \& {Q.} 
\end{cd}

Let us now observe a general fact that will be useful several times during the proof.
\begin{oss}
	 Since the category $\C$ is cocomplete also the category $[X/G](Y)$ is cocomplete. In fact, given a diagram in $[X/G](Y)$, its colimit can be constructed by taking the colimit in $\C$ of the domains of the given arrows and equipping it with a morphism into $Y$ and a morphism into $X$ both induced by the universal property of the colimit in $\C$.  
\end{oss}
By the previous remark, we know that $\eta$ is a morphism of $G$-bundles from $(P,\alpha)$ to $(Q,\beta)$. Moreover, the equality $\restr{\eta}{U_i}=\phi_i$ holds since the diagram 
\tr{P\x[Y]U_i}{Q\x[Y]U_i}{U_i}{\phi_i}{f_i\st \pi_P}{f_i\st \pi_Q}
is commutative because $\phi_i$ is a morphism of $G$-bundles over $U_i$ and the diagram
\sq{P\x[Y]U_i}{P}{Q\x[Y]U_i}{Q}{\pi_P\st f_i}{\phi_i}{\eta}{\pi_P\st f_i}
commutes by definition of $\gamma$ and $\eta$. 

\noindent This conclude the proof that the first of the gluing conditions holds.

Let now $(P,\alpha),(Q,\beta)\in [X/G](Y)$ and let $\phi,\psi\: (P,\alpha)\to (Q,\beta)$ be morphisms such that for every $i\in I$ $\restr{\phi}{U_i}=\restr{\psi}{U_i}$. We want to prove that under this assumptions we have $\phi=\psi$.
By definition, the restrictions of $\phi$ and $\psi$ fit in the following commutative diagrams
\begin{eqD*}
\pbsqunivvN{P\x[Y]U_i}{\phi\c \pi_P\st f_i}{\pi_{P\x[Y]U_i}}{\restr{\phi}{U_i}}{Q\x[Y]U_i}{Q}{U_i}{Y}{}{\pi_{Q\x[Y]U_i}}{\pi_Q}{f_i}
\quad\quad
\pbsqunivvN{P\x[Y]U_i}{\psi\c \pi_P\st f_i}{\pi_{P\x[Y]U_i}}{\restr{\psi}{U_i}}{Q\x[Y]U_i}{Q}{U_i}{Y.}{}{\pi_{Q\x[Y]U_i}}{\pi_Q}{f_i}
\end{eqD*}
Since $\restr{\phi}{U_i}=\restr{\psi}{U_i}$ by hypothesis, the commutativity of the previous diagrams implies that for every $i\in I$
$$\phi\c \pi_P\st f_i=\psi\c \pi_P\st f_i.$$
But then we can conclude that $\phi=\psi$ because the morphisms $\fami{\pi_P\st f_i}{i\in I}$ are jointly epimorphic since the morphisms $\fami{f_i}{i\in I}$ are jointly regular epimorphic.

\noindent This shows that the second of the gluing conditions holds.

It remains to prove that every descent datum is effective and this is the trickiest part of the entire proof. To prove this last condition, we are going to use sieves instead of covering families as this makes the proof a bit less complex. 

Given a sieve $S$ on $Y$, a descent datum on $S$ for $[X/G]$ is an assignment for every morphism $Z\ar{f} Y$ in $S$ of an object $(W_f, \alpha_f) \in [X/G](Z)$ and, for every pair of composable morphisms 
$$Z'\ar{g}Z\ar{f}Y$$
with $f\in S$, of an isomorphism 
$${\phi^{f,g}} \: g\st((W_f,\alpha_f)) \aisoo(W_{f\c g}, \alpha_{f\c g})$$ such that, given morphisms
$$Z''\ar{h} Z' \ar{g}  Z\ar{f} Y$$
with $f\in S$, the following diagram is commutative
\begin{cd}
	h\st (g\st ((W_f,\alpha_f)) \arrow[d,"", aisos] \arrow[r,"h\st \phi^{f,g}"] \&   h\st ((W_{f\c g}, \alpha_{f\c g}) \arrow[d,"\phi^{f\c g,h}"] \\
	(g\c h)\st ((W_f,\alpha_f)) \arrow[r,"\phi^{f,g\c h}"]\& {(W_{f\c g\c h}, \alpha_{f\c g\c h}),}
\end{cd}
where the isomorphism on the left is given by the fact that $[X/G]$ is a pseudofunctor (it is the inverse of $\varepsilon_{g,h}$ according to the notation introduced in the proof of Proposition~\ref{prest}). 

This descent datum is effective if there exist an object $(W,\alpha)\in [X/G](Y)$ and, for every morphism $Z\ar{f}Y\in S$, an isomorphism 
$$\psi^{f}\: f\st((W,\alpha)) \aisoo W_f$$
such that, given morphisms
$$Z'\ar{g} Z \ar{f} Y$$
with $f\in S$, the following diagram is commutative
\begin{cd}
	g\st (f\st ((W,\alpha)) \arrow[d,"", aisos] \arrow[r,"g\st \psi^{f}"] \&   g\st ((W_{f}, \alpha_{f}) \arrow[d,"\phi^{f,g}"] \\
	(f\c g)\st ((W,\alpha)) \arrow[r,"\psi^{f\c g}"]\& {(W_{f\c g}, \alpha_{f\c g}).}
\end{cd}

\begin{notazione}
	Abusing notation, we will sometimes identify the morphisms in $[X/G](Z)$ with the corresponding morphisms of $\C$ and so, for example, we will write the morphism $\phi^{f,g}$ as $g\st W_f \ar{\phi^{f,g}} W_{f\c g}$.
\end{notazione}
Firstly, we want to extend the assignment given by the descent datum to a functor. Since every morphism of the sieve $S$ is sent to an object of a different category of bundles according to its domain, we need to see all these bundles as objects of the same category of bundles. To do this, we can postcompone the given bundle with the starting morphism in $S$ to obtain an object of $[X/G](Y)$.

We want, now, to construct a functor
$$\Lambda\: S\to [X/G](Y)$$
and so we need to define the action on morphisms of $S$. Given two objects $Z\ar fY$ and $P\ar{t}Y$ of $S$, a morphism from $f$ to $t$ is simply a morphism in the slice category of $\C$ over $Y$, i.e. a morphism $Z\ar{k}P$ in $\C$ such that the following triangle commutes
\tr{Z}{P}{Y.}{k}{f}{t}
Noticing that $f=t\c k$, we define $\Lambda(k)\: (W_f,\alpha_f) \to (W_t,\alpha_t)$ as the following composite
$$W_f\ar{(\phi^{k,t})^{-1}} k\st W_t \ar{\pi_{W_t}\st k} W_t$$
and we observe that this is a morphism in $[X/G](Y)$ because the following diagram commutes
\begin{cd}
	{W_f} \arrow[r,"(\phi^{k,t})^{-1}"] \arrow[d,"\pi\shd{W_f}"']\& {k\st W_t } \arrow[r,"\pi_{W_t}\st k"] \arrow[d,"k\st \pi\shd{W_t}"]\& {W_t} \arrow[d,"\pi\shd{W_t}"]\\
	{Z} \arrow[r,"\id{Z}",equal] \arrow[rd,"f"']\& {Z} \arrow[r,"k"]\& {P} \arrow[ld,"t"]\\
	\& {Y.} \& 
\end{cd}
We now show that $\Lambda$, as such defined, is a functor. 

Let $Z\ar{f}Y$, $P\ar{t}Y$ and $Q\ar{s}Y$ be objects of $S$ and let $Z\ar{k}P$ and $P\ar{l}Q$ be morphisms between them. The following diagram shows, then, that $\Lambda(l\c k)=\Lambda(l) \c \Lambda (k)$:
\begin{cd}
	{W_f}  \arrow[r,"(\phi^{t,k})^{-1}"] \arrow[dd,"(\phi^{s,l\c k})^{-1}"']\& {k\st W_t} \arrow[d,"(k\st\phi^{s,l})^{-1}"']\arrow[r,"\pi\st_{W_t} k"] \& {W_t} \arrow[d,"(\phi^{s,l})^{-1}"]\\
	 \& {k\st(l\st(W_s))} \arrow[ld,"",aisos] \arrow[r,"l\st (\pi\st_{W_s} k)"']\& {l\st W_s} \arrow[d,"\pi\st_{W_s} l"]\\
	{(l\c k)\st W_s} \arrow[rr,"\pi\st_{W_s}(l\c k)"']\& \&{W_s.}
\end{cd}
Moreover, if we consider $Z\ar{f}Y\in S$ and $\id{Z}$ as morphism from $f$ to itself in $S$, in order to show that $\Lambda(\id{Z})=\id{\Lambda(Z)}$ we need to prove that $\phi^{\id{Z},f}=\pi\st_{W_f}  \id{Z}$.

By the compatibility condition of the descent datum, we have that the following square is commutative
\begin{cd}
	\id{Z}\st (\id{Z}\st W_f) \arrow[d,"", aisos] \arrow[r,"\id{Z}\st \phi^{f,\id{Z}}"] \&   \id{Z}\st W_f \arrow[d,"\phi^{f,\id{Z}}"] \\
	\id{Z}\st W_f \arrow[r,"\phi^{f,\id{Z}}"']\& W_f.
\end{cd}
On the other hand, by definition the morphism $\id{Z}\st \phi^{f,\id{Z}}$ satisfies the following diagram 
\pbsqunivv{\id{Z}\st(\id{Z}\st W_f)}   {\phi^{f,\id{Z}}\c (\id{Z}\st\pi\shd{W_f})\st\id{Z}} {\id{Z}\st \id{Z}\st \pi\shd{W_f}} {\id{Z}\st \phi^{f,\id{Z}}} {\id{Z}\st W_f} {W_f}{Z}{Z.}{}{}{\pi\shd{W_f}}{\id{Z}}
	
Combining the first diagram and the upper triangle of the second one, we obtain $$\phi^{\id{Z},f}=\pi\st_{W_f}  \id{Z}$$  and this concludes the proof that $\Lambda$ is a functor.

We can now define the pair $(W,\alpha)$ that will show that the descent datum is effective. 
We define
$$W\: = \colim \Lambda$$
and for every $f\in S$ we call $\sigma_f\: W_f\to W$ the cocone morphism. Considering the morphism $\alpha_f\: W_f\to X$ for every $f\in S$, we obtain a cocone and so we can use the universal property of the colimit to induce the morphism $\alpha\: W\to X$ as the unique morphism such that, for every $f\in S$, the following triangle commutes
\tr{W_f}{W}{X.}{\sigma_f}{\alpha_f}{\alpha}

After having defined the pair $(W,\alpha)$, our next step in the proof is the definition of an isomorphism
$$\psi^f\: f\st W \aisoo W_f$$
for every $f\in S$. 

First of all, we observe that, under our assumption that pullbacks preserve colimits in $\C$, we have 
$$f\st W=\colim (f\st \c \Lambda)$$
with universal cocone given by $f\st \sigma_t$ for every $t\in S$. This allows us to induce the isomorphism $\psi^f$ for every $f\in S$ using the universal property of the colimit and to do so we need to define a cocone $\fami{f\st W_t\ar{\Sigma_t} W_f}{t\in S}$.

We observe that $W_f$ is the colimit of the $(f\st t)\st W_f$ with $t$ that varies in $S$ with universal cocone given by $$(f\st t)\st W_f\ar{\pi\st_{W_f}(f\st t)} W_f$$ for every $t\in S$. Because of this, we can define $\Sigma_t\: f\st W_t \to W_f $ as 
$$f\st W_t \arr{\theta_t} (f\st t)\st W_f \arr{\pi\st_{W_f}(f\st t)} W_f,$$ where $\theta_t$ is defined as the following composite:
$$f\st W_t \ar{\lambda_{f,t}} (t\st f)\st  (W_t) \ar{\phi^{t,t\st f}} W_{t\c t\st f} \ar{(\phi^{f,f\st t})^{-1}} (f\st t)\st (W_f),$$
where $\lambda_{f,t}$ is the isomorphism given by the universal property of the pullback $(t\st f)\st W_t$ as in the following diagram
\begin{cd}
	{f\st W_t} \arrow[rrd,"(t\c \pi\shd{W_t})\st f", bend left=20] \arrow[rd,"\lambda_{f,t}"',aiso] \arrow[rddd,"f\st (t\c \pi\shd{W_t})"', bend right= 30]\& \& \\
	\& {(t\st f)\st W_t} \PB{rd}\arrow[r,""] \arrow[d,""]\& {W_t}\arrow[d,"\pi\shd{W_t}"]\\
	\& {f\st P} \PB{rd}\arrow[r,"t\st f"] \arrow[d,"f\st t"']\& {P} \arrow[d,"t"]\\
	\& {Z} \arrow[r,"f"]\& {Y.}
\end{cd}
Since we want to apply Lemma~\ref{colimnat}, we need to prove that there exist a functor $$K\: S \to [X/G](Y)$$ and a natural transformation $$\Theta\: \Lambda \c f\st \Rightarrow K$$ whose component associated to $t\in S$ is $\theta_t$ for every $t\in S$.

We can define the desired functor as $$K=(\pi\shd{W_f})\st \c f\st \c \dom$$ and, given the morphism 
\tr{R}{P}{Y}{l}{t\c l}{t}
in $S$, the following diagram shows that $\Theta\: \Lambda \c f\st \Rightarrow K$ of components $\theta_t$ for every $t\in S$ is a natural transformation.

$${\begin{cdsN}{12}{2.5}
  {f\st W_{t\c l}} \arrow[rrr,"f\st (\Lambda(l))"] \arrow[dd,"\lambda_{f,t\c l}"']\& \& \& {f\st W_t} \arrow[dd,"\lambda_{f,t}"]\\
	\&{((t\c l)\st f)\st (l\st W_l)} \arrow[rru,""] \arrow[d,"",aiso] \& \& \\
	{((t\c l)\st f)\st W_{t\c l}} \arrow[ru,"((t\c l)\st f)\st((\phi^{t,l})^{-1})"] \arrow[d,"\phi^{t,(t\c l)\st f}"']\& {(l\c (t\c l)\st f)\st W_t} \arrow[r,"",aiso]\& {((t\c l)\st l)\st (t\st f)\st W_t} \arrow[r,""] \arrow[d,"((t\st f)\st l)\st \phi^{t,t\st f}"]\& {(t\st f)\st W_t} \arrow[d,"\phi^{t,t\st f}"] \\
	{W_{f\st(t\c l)\c f}}\arrow[rr,"(\phi^{t\c t\st f,(t\st f) \st l})^{-1}"] \arrow[ru,"(\phi^{t,l\c (t\c l)\st f})^{-1}"] \arrow[dd,"(\phi^{f,f\st (t\c l)})^{-1}"']\& \& {((t\st f)\st l)\st W_{t\c t\st f}}\arrow[r,""] \arrow[ld,"(((t\st f)\st l)\st (\phi^{f,f\st t})^{-1}"]\& {W_{f\st t\c f}} \arrow[dd,"(\phi^{f,f\st t})^{-1}"]\\
	\& {((t\st f)\st l)\st (f\st t)\st W_f} \arrow[rrd,""]\& \& \\
	{\pi\st_{W_f}(f\st (R))} \arrow[ru,"",aiso] \arrow[rrr,"K(l)"']\& \& \& {\pi\st_{W_f}(f\st (P))}
\end{cdsN}}$$
\begin{oss}
	Most of the inner diagrams used to show that the previous diagram commutes are compatibility diagrams for the descent datum. The other ones are given by the explicit definitions of the morphisms in play.
\end{oss}
 By Lemma~\ref{colimnat}, we obtain that $\fami{f\st W_t\ar{\Sigma_t} W_f}{t\in S}$ is a cocone for $W_f$ and so, using the universal property of the colimit, we can define $\psi^f \: f\st W \to W_f$ as the unique morphism such that the following diagram is commutative
 \tr{f\st W_t}{F\st W}{W_f}{f\st \sigma_t}{\Sigma_t}{\psi^f}
 for every $t\in S$. 
 Moreover, since $\theta_t$ is an isomorphism for every $t\in S$, we have that  $\psi^f$ is an isomorphism.
 
 To conclude our proof it suffices to show that, given morphisms
 $$Z'\ar{g} Z \ar{f} Y$$
 with $f\in S$, the following diagram is commutative
 \begin{cds}{8}{7}
 	g\st (f\st ((W,\alpha)) \arrow[d,"", aisos] \arrow[r,"g\st \psi^{f}"] \&   g\st ((W_{f}, \alpha_{f}) \arrow[d,"\phi^{f,g}"] \\
 	(f\c g)\st ((W,\alpha)) \arrow[r,"\psi^{f\c g}"]\& {(W_{f\c g}, \alpha_{f\c g}).}
 \end{cds}
 
 Since $g\st (f\st( W))$ is a colimit with universal cocone given by
 $$\fami{g(\st (f\st(W_t))\ar{g\st f\st \sigma_t}f\st (f\st (W))}{t\in S},$$ it suffices to prove that the two paths of the diagram are equal if precomposed by $g\st f\st \sigma_t$ for every $t\in S$. In order to prove this, we need to consider some isomorphisms given by the universal property of pullbacks that are defined as in the following diagrams:
 \pagebreak
 $$\scalebox{0.95}{\begin{cdsN}{6}{-0.8}
 	{((f\st t)\st g \c \varepsilon) \st (t\st f) W_t} \arrow[rrrrdd,"", bend left= 19] \arrow[rddd,"", bend right,end anchor=west] \arrow[rdd,"j"',aiso]\&[-5ex] \&[-3ex] \& \&[2.5ex] \\[-1ex]
 	\& \& {g\st ((t\st f) \st W_t)} \arrow[rd,"k^{-1}"', aiso] \arrow[rrd,"", bend left=6] \arrow[rddd,"", bend right=16]\& \& \\
 	\& {\varepsilon \st(((f\st t)\st g)\st ((t\st f)\st W_t))} \PB{rrd} \arrow[rr,"\tilde{\varepsilon}"',crossing over] \arrow[d,""]\& \& {((f\st t)\st g)\st ((t\st f)\st W_t)} \arrow[r,""] \arrow[d,""] \& {(t\st f)\st W_t} \arrow[d,"\pi\shd{W_{f\c f\st t}}"] \\
 	\& {(f\c g)\st P} \arrow[rr,"\varepsilon"',crossing over]\& \& {g\st f\st P} \PB{rd} \arrow[d,""] \arrow[r,"(f\st t)\st g"']\& {f\st P} \arrow[d,"f\st t"]\\
 	\& \& \& {Z'} \arrow[r,"g"'] \& {Z} 
  \end{cdsN}}$$
 
 $$\scalebox{0.95}{\begin{cdsN}{6}{1}
 	{((f\st t)\st g \c\varepsilon) W_{f\c f\st t}} \arrow[rdd,"r"', aiso] \arrow[rrrrdd,"\pi\st_{W_{f\c f\st t}}((f\st t)\st g \c \varepsilon)", bend left=17] \arrow[rddd,"", bend right=30, end anchor=west]\&[-2ex]\& \& \&[3.5ex] \\
 	\& \& {g\st W_{f\c f\st t}} \arrow[rd,"s^{-1}"', aiso] \arrow[rrd,"", bend left=6] \arrow[rddd,"", bend right=25, end anchor=north west]\& \& \\
 	\& {\varepsilon \st (((f\st t)\st g)\st W_{f\c f\st t})} \PB{rrd} \arrow[rr,"\varepsilon '"',crossing over] \arrow[d,""]\& \& {((f\st t)\st g)\st W_{f\c f\st t}} \arrow[r,""] \arrow[d,""] \PB{rd} \& {W_{f\c f\st t}} \arrow[d,"\pi\shd{W_{f\c f\st t}}"] \\
 	\& {(f\c g)\st P} \arrow[rr,"\varepsilon"',crossing over]\& \& {g\st f\st P} \arrow[r,""] \arrow[d,""] \PB{rd} \& {f\st P} \arrow[d,"f\st t"]\\
 	\& \& \& {Z'} \arrow[r,"g"'] \& {Z} 
 \end{cdsN}}$$

 $$\scalebox{0.92}{\begin{cdsN}{6}{-1.3}
 	{((f\st t)\st g\c\varepsilon)\st ((f\st t)\st W_f)} \arrow[rdd,"t"', aiso] \arrow[rrrrdd,"", bend left=17] \arrow[rddd,"", bend right=30, end anchor=west]\&[-4ex] \&[-4ex]\& \&[3.5ex] \\
 	\& \& {g\st ((f\st t)\st W_f)} \arrow[rd,"v^{-1}"', aiso] \arrow[rrd,"", bend left=6] \arrow[rddd,"", bend right=16]\& \& \\
 	\& {\varepsilon \st(((f\st t)\st g)\st ((f\st t)\st W_f))} \arrow[rr,"\varepsilon ''"', crossing over] \arrow[d,""] \PB{rrd}\& \& {((f\st t)\st g)\st ((f\st t)\st W_f)} \arrow[r,""]  \arrow[d,""] \PB{rd}\& {(f\st t)\st W_f} \arrow[d,"(f\st t)\st \pi \shd{W_f}"] \\
 	\& {(f\c g)\st P} \arrow[rr,"\varepsilon"',crossing over]\& \& {g\st f\st P} \arrow[d,""] \PB{rd} \arrow[r,""]\& {f\st P}  \arrow[d,"t\st f"]\\
 	\& \& \& {Z'}  \arrow[r,"g"']\& {Z} 
 	\end{cdsN}}$$
 
 The following diagram, then, shows what we were required to prove: 
 $$\scalebox{0.87}{\normalsize\begin{cdsN}{13}{0.4}
 	{g\st(f\st(W_t))} \arrow[rdddddd,"",aiso,bend right= 8, end anchor=north west] \arrow[rr,"g\st f\st \sigma_t"] \arrow[rd,"g\st \lambda_{f,t}"] \arrow[ddddddd,"g\st f\st \sigma_t"']\& \&[1ex]  {g\st(f\st(W))} \arrow[rr,"g\st \psi^{f}"] \&[1ex]  \& {g\st W_f} \arrow[ddddddd,"\psi^{f,g}"] \\
 	\&  {g\st (t\st f)\st W_t} \arrow[r,"g\st \phi^{t,t\st f}"]\& {g\st W_{f\c f\st t}} \arrow[r,"g\st ((\phi^{f,f\st t})^{-1})"]\& {g\st (f\st t)\st (W_f)} \arrow[ru,"(g\st(\pi\st_{W_f}))\st(f\st t)"] \& \\
 	\&  {((f\st t)\st g)\st((t\st f)\st W_t)} \arrow[u,"k",aisos]\&{((f\st t)\st g)\st W_{f\c f\st t} } \arrow[u,"s",aisos]\& {((f\st t)\st g)\st((f\st t)\st W_f)} \arrow[u,"v", aisos]\&  \\
 	\&  {\varepsilon \st (((f\st t)\st g)\st((t\st f)\st W_t))}\arrow[u,"\tilde{\varepsilon}",aisos] \&{\varepsilon \st (((f\st t)\st g)\st W_{f\c f\st t}) } \arrow[u,"\varepsilon'",aisos]\& {\varepsilon \st (((f\st t)\st g)\st((f\st t)\st W_f))} \arrow[u,"\varepsilon''",aisos]\&  \\
 	\&  {((f\st t)\st g \c \varepsilon)\st ((t\st f)\st W_t)}\arrow[u,"j",aisos] \arrow[r,"((f\st t)\st g\c \varepsilon)\st \phi^{t,t\st f}"{outer sep=1ex}] \arrow[d,"",aiso]\& {((f\st t)\st g \c \varepsilon)\st W_{t\st f \c f}} \arrow[u,"r",aisos] \arrow[r,"((f\st t)\st g \c \varepsilon)\st((\phi^{f,f\st t})^{-1})"{outer sep=1ex}] \arrow[d,"\phi^{f\c f\st t,(f\st t)\st g \c \varepsilon}"]\& {((f\st t)\st g \c \varepsilon)\st ((f\st t)\st W_f)} \arrow[u,"t"] \arrow[d,"",aiso]\&  \\
 	\&  {t\st (((f\c g)\st)W_t)} \arrow[r,"\phi^{t,t\st (f\c g)}"]\& {W_{t\c t\st (f\c g)}} \arrow[r,"(\phi^{f,g\c (f\c g)\st t})^{-1}"] \arrow[d,"(\phi^{f\c g, (f\c g)\st t})^{-1}"] \&{((g\c (f\c g)\st t)\st W_f} \arrow[d,"",aisos]\& \\
 	\&  {(f\c g)\st W_t} \arrow[u,"\lambda_{f\c g,t}"'] \arrow[rd,"(f\c g)\st \sigma_t"]\& {(f\c g)\st (t\st W_{f\c g})} \arrow[rrd,"\pi\st_{W_{f\c g}}((f\c g)\st t)"]\& {(f\c g)\st t)\st (g\st W_f)} \arrow[l,"((f\c g)\st t)\st \phi^{f,g}"'{outer sep=1ex}] \&  \\
 	{g\st (f\st (W))} \arrow[rr,"",aiso]\& \& {(f\c g)\st W} \arrow[rr,"\psi^{f\c g}"']\&  \& {W_{f\c g},} 
 \end{cdsN}}$$
where all the inner diagrams are given either by compatibility conditions of the descent datum or can be proved to be commutative in a direct way using the universal property of the pullbacks involved and the definitions of the various morphisms. 
\end{proof} 

\subsection*{Acknowledgements}
I would like to thank the referee for their useful comments and their advice about the interactions of this paper with the existing literature. Moreover, I would like to thank my supervisor Frank Neumann for his precious advice and his support. Finally, I would like to thank Luca Mesiti for talking to me about the sieves approach to the notion of descent datum.
	\bibliographystyle{abbrv} 
	\bibliography{Bibliography2}
	\end{document}